\documentclass[a4paper,reqno]{amsart}
\usepackage{amscd,amsfonts,amssymb}
\usepackage{textgreek}
\usepackage{tikz}
\usepackage{pgfplots}
\usetikzlibrary{automata,positioning,calc,trees}
\usetikzlibrary{intersections,pgfplots.fillbetween}
\usepackage{graphicx,tikz}
\usepackage{pgf,tikz,pgfplots}
\usepackage{mathrsfs}
\usetikzlibrary{arrows}
\usepackage{enumerate}
\usepackage[shortlabels]{enumitem}
\usepackage{mathrsfs}
\usepackage{amssymb,amsmath,amsthm,color}
\usepackage{caption,subcaption}
\usepackage{hyperref}
\usepackage{cleveref}
\usepackage[alphabetic,bibtex-style]{amsrefs}
\usepackage{url}
\usepackage{setspace}
\usepackage{float}
\usepackage{makecell}
\tikzstyle{startstop} = [rectangle, rounded corners, minimum width=3cm, minimum height=1cm,text centered, draw=black]
\tikzstyle{process} = [rectangle, rounded corners, minimum width=3cm, minimum height=1cm, text centered, draw=black]
\tikzstyle{arrow} = [thick,->,>=stealth]

\BibSpec{article}{%
	+{}  {\PrintAuthors}                {author}
	+{,} { \textit}                     {title}
	+{.} { }                            {part}
	+{:} { \textit}                     {subtitle}
	+{,} { \PrintContributions}         {contribution}
	+{.} { \PrintPartials}              {partial}
	+{,} { }                            {journal}
	+{}  { \textbf}                     {volume}
	+{}  { \PrintDatePV}                {date}
	+{,} { \issuetext}                  {number}
	+{,} { \eprintpages}                {pages}
	+{,} { }                            {status}
	+{,} { \url}                        {url}    
	+{,} { \PrintDOI}                   {doi}
	+{,} { available at \eprint}        {eprint}
	+{}  { \parenthesize}               {language}
	+{}  { \PrintTranslation}           {translation}
	+{;} { \PrintReprint}               {reprint}
	+{.} { }                            {note}
	+{.} {}                             {transition}
}

\date{\today}
\allowdisplaybreaks
\newcommand{\R}{{\mathbb R}}       
\newcommand{\N}{{\mathbb N}}
\newcommand{\Sp}{{\mathbb S}}
\newcommand{\Z}{{\mathbb Z}}    
\newcommand{\supp}[1]{\operatorname{supp}(#1)}   

\newtheorem{theorem}{Theorem}[section]
\newtheorem{lemma}[theorem]{Lemma}

\newtheorem*{theorem*}{Theorem}

\theoremstyle{definition}

\numberwithin{equation}{section}

\begin{document}
	
\title[Endpoint variation and jump inequalities]{Endpoint variation and jump inequalities for rough singular integrals}
\author[Bhojak]{Ankit Bhojak}
	\address{Ankit Bhojak\\
		Department of Mathematics\\
		Indian Institute of Science Education and Research Bhopal\\
		Bhopal-462066, India.}
	\email{ankitb@iiserb.ac.in}

\author[Shrivastava]{Saurabh Shrivastava}
	\address{Saurabh Shrivastava\\
		Department of Mathematics\\
		Indian Institute of Science Education and Research Bhopal\\
		Bhopal-462066, India.}
	\email{saurabhk@iiserb.ac.in}
\thanks{}
\begin{abstract}
	 In this article, we prove weak type $(1,1)$ bounds for the variation and jump operators corresponding to the family of truncations of singular integrals with rough kernels. This resolves an open question raised by Jones, Seeger and Wright (Trans. Amer. Math. Soc. (2008)). Moreover, as an immediate consequence of the variational estimate, we recover the weak type $(1,1)$ boundedness of the maximal truncation operator corresponding to singular integrals with rough kernels. 
\end{abstract}
	
\keywords{Variation inequalities, jump inequalities, weak type estimate, maximal operator}	
\subjclass[2020]{Primary 42B25, 42B20}	
\maketitle
	
\section{Introduction}
The study of variational and jump inequalities is a recurrent theme in harmonic analysis and ergodic theory. A vast amount of research, see for example  \cite{Lep1976,Bourgain1989,JKRW1998,JRW2001,JRW2003,CJRRW2000,CJRW2003,JSW2008} have been devoted to the study of such inequalities. This article is concerned with the endpoint estimates for variational and jump operators for the family of truncations of singular integral operators with rough kernels defined as follows:
\[T_{\Omega,\epsilon} f(x)=\int_{|x-y|>\epsilon}\Omega\left(\frac{x-y}{|x-y|}\right)|x-y|^{-d}f(y)dy,\]
where $\Omega\in L\log L(\Sp^{d-1})$ with $\int\Omega(\theta)d\sigma(\theta)=0$.

Calder\'on and Zygmund \cite{CZ1956} showed that the operators $T_\Omega$ and its maximal variant $T_\Omega^*$ defined as
\[T_{\Omega} f(x)=\lim_{\epsilon\to0}T_{\Omega,\epsilon} f(x),\quad T_\Omega^*f(x)=\sup_{\epsilon>0}|T_{\Omega,\epsilon}f(x)|,\]
are bounded in $L^p(\R^d)$ for $1<p<\infty$. We also refer to \cite{DR1986} for an alternate proof involving a double dyadic analysis and Fourier transform estimates. At the endpoint $p=1$, Christ and Rubio de Francia \cite{CR1988} and independently Hofmann \cite{Hofmann1988} showed that $T_\Omega$ maps $L^1(\R^d)$ to $L^{1,\infty}(\R^d)$ for $d=2$. The weak $(1,1)$ boundedness for $T_\Omega$ was extended for all dimensions $d\geq2$ by Seeger \cite{Seeger1996} using a microlocal analysis of the kernel. The boundedness of the maximal operator $T_\Omega^*$ at the endpoint $p=1$ remained open until very recently. Honz\'ik \cite{Honzik2020} showed that $T_\Omega^*$ maps the space $L(\log\log L)^{2+\epsilon}(\R^d)$ to $L^{1,\infty}(\R^d)$ locally for $\epsilon>0$ and $\Omega\in L^\infty(\Sp^{d-1})$. A further improvement was given by the first author and Mohanty in \cite{BM2023}, where they showed that $T_\Omega^*$ maps the larger space $L\log\log L(\R^d)$ to $L^{1,\infty}(\R^d)$ for $\Omega\in L\log L(\Sp^{d-1})$. Recently, Lai \cite{Lai2025} showed that $T_\Omega^*$ indeed maps $L^1(\R^d)$ to $L^{1,\infty}(\R^d)$. 

In this paper, we are concerned with the variation and jump inequalities corresponding to the families of truncations of singular integrals with rough kernels. For $1<q\leq\infty$, consider the $q-$variations associated to a family of functions $\{F_t:t\in\mathcal{I}\}$ defined as follows
\[V_q(F_t(x):t\in\mathcal{I})=\begin{cases}\sup\limits_{N\in\N}\sup\limits_{\substack{t_1<t_2<\cdots t_N\\t_i\in\mathcal{I}}}\left(\sum_{i=1}^{N-1}|F_{t_{i+1}}(x)-F_{t_i}(x)|^q\right)^\frac{1}{q},&1<q<\infty\\
	\sup\limits_{t_1,t_2\in\mathcal{I}:t_1<t_2}|F_{t_1}(x)-F_{t_2}(x)|,&q=\infty.
\end{cases}\]
The $\lambda-$jump corresponding to the family of functions $\{F_t:t\in\mathcal{I}\}$ is defined as follows
\begin{align*}
	N_\lambda(F_t(x):t\in\mathcal{I})=\sup\Big\{N\in\N\cup\{0\}:&\exists t_1,t_2,\cdots,t_N\in\mathcal{I},\;0<t_1<t_2<\cdots<t_N\\
	&\text{such that}|F_{t_{i+1}}(x)-F_{t_i}(x)|>\lambda\Big\}.
\end{align*} 
We refer the reader to~\cite{JSW2008} for more details about the notion of variational and jump inequalities. For notational convenience, we write $V_q(F_t(x))=V_q(F_t(x):t\in(0,\infty))$ for the special case $\mathcal{I}=(0,\infty)$. Similarly, we set $N_\lambda(F_t(x))=N_\lambda(F_t(x):t\in(0,\infty))$.

Campbell, Jones, Reinhold and Wierdl~\cites{CJRRW2000,CJRW2003} investigated the $L^p-$bounds of variational and jump inequalities for the family $\{T_{\Omega,\epsilon}:\;\epsilon\in (0,\infty)\}$. In particular, they proved the following bounds for $\Omega\in L\log L(\Sp^{d-1})$ and $1<p<\infty$:
\begin{align}
	\left\|V_q(T_{\Omega,\epsilon}f)\right\|_p\lesssim\|f\|_p,\quad\text{for}\;q>2,\label{strong:var}\\
	\left\|\lambda(N_\lambda(T_{\Omega,\epsilon}f))^\frac{1}{\rho}\right\|_p\lesssim\|f\|_p,\quad\text{for}\;\rho>2.\label{strong:jump}
\end{align}
Jones, Seeger and Wright \cite{JSW2008} established the strong type inequality \eqref{strong:jump} at the critical case $\rho=2$ for $\Omega\in L^r(\Sp^{d-1}),\;1<r\leq\infty$. The estimate \eqref{strong:jump} for $\rho=2$ and $\Omega\in L\log L(\Sp^{d-1})$ was obtained by Ding, Hong and Liu~\cite{DHL2017}.

In \cite{CJRRW2000,CJRW2003}, the following weak type $(1,1)$ bounds, when $\Omega$ is a Lipschitz function, were also established. 
\begin{align}
	\left|\left\{x\in\R^d:\;V_q(T_{\Omega,\epsilon}f(x))>\alpha\right\}\right|\lesssim\frac{\|f\|_1}{\alpha},\quad\text{for}\;q>2,\label{weak:var}\\
	\left|\left\{x\in\R^d:\;\lambda(N_\lambda(T_{\Omega,\epsilon}f(x)))^\frac{1}{\rho}>\alpha\right\}\right|\lesssim\frac{\|f\|_1}{\alpha},\quad\text{for}\;\rho>2.\label{weak:jump}
\end{align}
In \cite{JSW2008} the weak type $(1,1)$ inequality \eqref{weak:jump} was proved at the critical case $\rho=2$ for $\Omega$ being a Lipschitz function. In the same paper, the question of weak type $(1,1)$ boundedness of the underlying operator in the absence of any smoothness assumption on $\Omega$ was posed and left open. In this paper, we resolve this question affirmatively; this constitutes the main result of our work.
\begin{theorem}\label{mainthm}
	Let $\Omega\in L\log L(\Sp^{d-1})$. Then, for all $\alpha>0$, we have
	\begin{equation}\label{main:jump}
		\left|\left\{x\in\R^d:\lambda[N_\lambda(T_{\Omega,\epsilon} f(x))]^\frac{1}{2}>\alpha\right\}\right|\lesssim\frac{\|f\|_1}{\alpha}.
	\end{equation}
	Moreover for $q>2$, we have
	\begin{equation}\label{main:var}
		\left|\left\{x\in\R^d:\;V_q(T_{\Omega,\epsilon}f(x))>\alpha\right\}\right|\lesssim\frac{\|f\|_1}{\alpha}.
	\end{equation}
\end{theorem}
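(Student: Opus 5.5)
We reduce \eqref{main:var} to \eqref{main:jump} first, using the standard Jones--Seeger--Wright principle that $V_q$ for $q>2$ is controlled by a weighted sum of dyadic jump counts: pointwise
\[
V_q(F_t)\lesssim \Big(\sum_{k\in\Z}2^{kq}N_{2^k}(F_t)\Big)^{1/q},
\]
and a weak $(1,1)$ bound for $\sup_\lambda \lambda N_\lambda^{1/2}$ (which is what \eqref{main:jump} is, uniformly in $\lambda$, taken as a weak-type bound on the square-function type quantity) gives the weak $(1,1)$ bound for $V_q$ through the usual layer-cake argument. I will therefore focus on \eqref{main:jump}, proving it in the stronger form with $\sup_{\lambda>0}$ inside, as in \cite{JSW2008}.

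\textbf{Step 1: Short/long splitting.} Write $T_{\Omega,\epsilon}f=\sum_{j}K_j*f$ with $K_j(x)=\Omega(x')|x|^{-d}\chi_{2^j<|x|\le 2^{j+1}}$. By the known estimate $\lambda N_\lambda^{1/2}\lesssim V_2^{\mathrm{long}}+S$, with $V_2^{\mathrm{long}}$ the jump/variation over the dyadic partial sums $\epsilon=2^k$ and $S=(\sum_k V_2(T_{\Omega,\epsilon}f:\epsilon\in[2^k,2^{k+1}])^2)^{1/2}$ the short variation, we treat the two pieces separately.

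\textbf{Step 2: Short variations.} Pointwise, $V_2$ on $[2^k,2^{k+1}]$ is bounded by $V_1\le \int_{2^k<|y|\le 2^{k+1}}|\Omega(y')||y|^{-d}|f(x-y)|dy=|K_k|*|f|$, but the $\ell^2$ sum of these is not weak $(1,1)$ trivially. Instead, perform a Calder\'on--Zygmund decomposition $f=g+\sum_Q b_Q$ at height $\alpha$. The good part is handled by the $L^2$ bounds \eqref{strong:var}, \eqref{strong:jump} of \cite{DHL2017}. For bad atoms with side $2^{s}$, discard the dilated union $E^*$; for $k\le s$ the contribution lives essentially in $E^*$; for $k>s$ the short-variation function is dominated via the $\ell^2$ sum of $(|K_k|*|b_Q|)$, and I would split $\Omega$ into pieces $\Omega\chi_{\{|\Omega|\sim 2^m\}}$, using $L\log L$ to sum in $m$, exactly as in Seeger's argument.

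\textbf{Step 3: Long variations, the main obstacle.} For the dyadic family $\sum_{j\ge k}K_j*f$ the good part again follows from $L^2$. For the bad part one follows Seeger's microlocal decomposition: for $k=s+n$ with $n\ge1$, decompose $K_k$ into angular pieces at scale $2^{-n\gamma}$ and split $K_k*b_Q$ into a piece where the direction of the kernel is aligned with the frequency (treated by $L^2$ orthogonality and the $L^2$ jump inequality for the family) and a nonaligned error with $L^1$ decay $2^{-\delta n}$. The difficulty is that the $L^2$ part must be estimated for the variation operator rather than for a single operator; this requires an $L^2$ jump inequality for a family of operators with smooth multipliers, which I would obtain from the Jones--Seeger--Wright $L^2$ jump inequality for dyadic martingales plus square function comparison. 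I expect Step 3 to be the main difficulty, and the approach would mirror Lai's recent work \cite{Lai2025} for $T_\Omega^*$, replacing $\ell^\infty$ by $\ell^2$ jump structure and controlling the long variation through a sparse/CZ decomposition in each scale.
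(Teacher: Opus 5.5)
Your outer framework (Calder\'on--Zygmund decomposition, the good part via the $L^2$ jump bounds of \cite{DHL2017}, short/long splitting, Seeger's microlocal decomposition, deducing \eqref{main:var} from \eqref{main:jump}) is the paper's. Both estimates for the bad part, however, have gaps.

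\textbf{Step 2 (short jumps).} Bounding the short variation on $[2^k,2^{k+1}]$ by $V_1\le|K_k|*|b|$ discards the mean-zero property of the $b_Q$, and nothing that remains gives decay in the relative scale between the cube and the kernel. Splitting $\Omega$ into level sets and using $L\log L$ only controls the size of $\Omega$; the paper does this by removing $\{\Omega>2^{\delta_0 s}\|\Omega\|_1\}$ and bounding that piece by $V_1$. In fact $(\sum_k(|K_k|*|b|)^2)^{1/2}$ fails the required bound off $\bigcup 4Q$. Take $|\Omega|\equiv1$ and unit cubes spaced $N$ apart in a ball of radius $L$: the square function is about $\alpha N^{-d}\sqrt{\log(L/N)}$ on most of that ball. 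So once $\log(L/N)\gg N^{2d}$ it exceeds $\alpha$ on a set of measure about $L^d\approx N^d\|f\|_1/\alpha$.

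The missing idea is the paper's geometric splitting. In its notation (cubes of side $2^{k-s}$, kernel at scale $2^k$), fix $x$ and an interval $[\epsilon^k_{i-1}(x),\epsilon^k_i(x)]$ realizing the $2$-variation. The cubes cut by one of the spheres $|x-y|=\epsilon^k_i(x)$ number only $O(2^{s(d-1)})$, so size bounds alone give the gain $2^{-(1-\delta_0)s}$. For cubes lying inside a shell, the truncated kernel acts as $H^s_k$, so cancellation enters through \eqref{est:singlescaledecay} and \Cref{VarRM}. This costs only $s^2$, because a cube of side $2^k$ contains at most $2^{sd}$ of them.

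\textbf{Step 3 (long jumps).} The proposed mechanism does not fit the problem. After splitting by relative scale, the long family for the bad part is $\{\sum_{j\ge k}H^s_j*B^{j-s}\}_{k\in\Z}$. Its input changes with $j$, and $b$ is only integrable. So an $L^2$ jump inequality for multiplier operators acting on a fixed function gives neither a bound in terms of $\alpha\|b\|_1$ nor decay in $s$. What Seeger's estimates provide is a bound $\lesssim2^{-cs}\alpha\sum_Q\|b_Q\|_1$ for $\|\sum_j\varepsilon_jH^s_j*B^{j-s}\|_2^2$, uniformly in the signs. \Cref{VarRM} turns this into a variational bound only at the cost $(\log N)^2$, which is infinite when all $j\in\Z$ are present.

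The paper's key step is to cut the effective number of scales at each point down to $2^{s(d+\epsilon_0)}$. Using $3^d$ shifted dyadic grids, it places the support of each $H^s_j*B^{j-s,m}$ in a cube $\widetilde K_{j,m}$ from a single grid, so these cubes are nested. It discards the set where more than $2^{s(d+\epsilon_0)}$ of them overlap, which has measure $\lesssim2^{-s\epsilon_0}\|f\|_1/\alpha$ by Chebyshev. It then layers the remaining indices by nesting into at most $2^{s(d+\epsilon_0)}$ generations, so \Cref{VarRM} costs only $s^2$, which is absorbed by $2^{-cs}$.

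This support argument needs compactly supported kernels. That is why the paper first merges Seeger's $L^2$ bound for $\Gamma^s_j$ and $L^1$ bound for $H_j-\Gamma^s_j$ into one $L^2$ bound for $H_j$ itself (\Cref{lemma:Seegercombined}). The aligned pieces $\Gamma^s_j$ that you want to treat separately are defined by homogeneous Fourier multipliers and have no compact support. Pointing to a sparse decomposition as in \cite{Lai2025} does not supply this step.

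Finally, \cite{JSW2008} and the paper prove the jump bound uniformly in $\lambda$ with the supremum outside, not inside. That is all you need for \eqref{main:var}, since your dyadic summation over $N_{2^k}$ already works from uniform weak-type bounds.
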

By a standard limiting argument, \Cref{mainthm} recovers the weak $(1,1)$ boundedness of $T_\Omega^*$ proved recently by Lai~\cite{Lai2025}.
\subsection*{Methodology of the proof of \Cref{mainthm}:} We first note that by \cite[Lemma 2.3]{MSZ2020}, the weak type $(1,1)$ estimate \eqref{main:var} for the $q-$variation operator will follow from the corresponding weak type estimate \eqref{main:jump} involving the jump function. Further, it is well known that in order to prove \eqref{main:jump}, it is enough to consider the short and long jump operators separately via the inequality \eqref{shortlong}. In both the cases, the variational Rademacher-Menshov theorem is a key ingredient. This result was first obtained in \cite{LL2012}. We will use the following version of the variational Rademacher-Menshov theorem from \cite[Lemma 7.2]{DOP2017}.
\begin{lemma}[\cite{DOP2017}]\label{VarRM}
	Let $\{F_i\}_{i=1}^N$ be a sequence of functions on a measure space $X$ such that for all $\varepsilon_i\in\{-1,0,1\}$, we have
	\begin{equation*}
		\left\|\sum_{i=1}^N\varepsilon_iF_i\right\|_2\leq B.
	\end{equation*}
	Then, the following estimate holds 
	\begin{equation*}
		\left\|V_2\left(\sum_{i=1}^nF_i:1\leq n\leq N\right)\right\|_2\lesssim (\log N) B.
	\end{equation*}
\end{lemma}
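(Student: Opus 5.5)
The natural route is the classical dyadic Rademacher--Menshov argument run through the partial sums of the sequence, with the sign choices absorbed into the hypothesis. By monotone convergence and a standard reduction, we may assume $N=2^K$; padding with $F_i=0$ costs only a constant in $\log N$. For each level $0\le k\le K$ we partition $\{1,\dots,N\}$ into consecutive dyadic blocks $I_{k,j}=((j-1)2^{k},j2^{k}]$, $1\le j\le 2^{K-k}$, and set $S_{k,j}=\sum_{i\in I_{k,j}}F_i$. Write $S_n=\sum_{i\le n}F_i$.

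The first step is the pointwise bound
\[
V_2(S_n:1\le n\le N)\lesssim \sum_{k=0}^{K}\Big(\sum_j |S_{k,j}|^2\Big)^{1/2}.
\]
Any increment $S_{n'}-S_{n}$ decomposes, as in the binary expansion of the interval $(n,n']$, into a sum of at most two blocks $S_{k,j}$ at each level $k$. Given a partition $n_1<\dots<n_M$, a block at level $k$ is used by at most one increment, since the intervals $(n_l,n_{l+1}]$ are disjoint. Applying Minkowski in $\ell^2$ over the increments, separately for each level, gives
\[
\Big(\sum_l|S_{n_{l+1}}-S_{n_l}|^2\Big)^{1/2}\le \sum_k 2\Big(\sum_j|S_{k,j}|^2\Big)^{1/2}.
\]
Since the right side does not depend on the partition, we may take the supremum.

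The second step is to take $L^2$ norms. By the triangle inequality it suffices to show, for each fixed $k$, that
\[
\Big\|\Big(\sum_j|S_{k,j}|^2\Big)^{1/2}\Big\|_2\le B.
\]
Here we use the hypothesis with random signs. Take independent Rademacher variables $r_j$ and set $\varepsilon_i=r_j$ for $i\in I_{k,j}$, so that $\sum_j r_jS_{k,j}=\sum_i\varepsilon_iF_i$. Averaging over the signs, orthogonality of the $r_j$ gives
\[
\int\sum_j|S_{k,j}|^2=\mathbb{E}\Big\|\sum_j r_jS_{k,j}\Big\|_2^2\le B^2.
\]
Since this works with real or complex functions alike, summing over the $K+1\lesssim\log N$ levels yields $\|V_2\|_2\lesssim(\log N)B$.

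The main obstacle is the combinatorial first step: checking carefully that each level contributes at most two blocks per increment and that distinct increments use disjoint blocks. The rest is routine. Only signs $\pm1$ are needed; the value $0$ in the hypothesis would allow the variant that handles each block alone, but it is not required here.
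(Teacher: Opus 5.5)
Your proof is correct. The pointwise bound $V_2(S_n:1\le n\le N)\le 2\sum_{k=0}^{K}\bigl(\sum_j|S_{k,j}|^2\bigr)^{1/2}$ follows from the at-most-two-blocks-per-level decomposition, and averaging over block-constant random signs then gives $\|V_2\|_2\le 2(K+1)B\lesssim(\log N)B$ for $N\ge 2$.

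The paper gives no proof of this lemma; it simply imports it from \cite{DOP2017}, after \cite{LL2012}. What you wrote is the standard dyadic Rademacher--Menshov argument for this kind of variational estimate. As you note, it uses only the signs $\pm1$ from the hypothesis.
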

The second ingredient consists of microlocal estimates of Seeger \cite{Seeger1996}. Our task is to  combine the $L^1$ and $L^2$ estimates of Seeger \cite{Seeger1996} into a single $L^2$ estimate. This step relies on a careful interpolation argument, motivated by the approach in \cite{Honzik2020}. We provide the details of this estimate in \Cref{sec:microlocal}. 

The main distinction between the short and long jumps is the presence of multiscale analysis in the estimate for the long jump. The estimate for the short jumps relies on a single scale argument reminiscent of the ideas developed in \cite{CJRW2003}. In contrast, the estimates for the long jumps are based on an algorithm that controls the interaction of scales, pioneered in the work of Krause and Lacey \cite{KL2018,KL2020} for obtaining sparse bounds for maximal truncations of oscillatory Calder\'on-Zygmund operators. We also refer to \cite{IKM2025} for a simplified approach to proving endpoint estimates for such maximal oscillatory operators in dimension one. 

In the process of adapting the ideas of \cite{KL2020}, we streamline certain aspects of their proof. First, we do away with the recursion argument employed there. Second, we introduce an effective decomposition of the bad functions based on the shifted dyadic grids. This allows us to use the $L^2-$bounds (\Cref{lemma:Seegercombined}) as a blackbox instead of arduously reproving similar estimates for a localised version of the truncations of rough singular integrals. As a consequence, this approach provides us with a new and simpler proof of the weak $(1,1)-$boundedness of $T_\Omega^*$. The proof of \Cref{mainthm} \eqref{main:jump} is given in \Cref{sec:proofmain}.

\section{Proof of \Cref{mainthm} \eqref{main:jump}}\label{sec:proofmain}
Let $f\in L^1(\R^d)$ and $\alpha>0$. By the Calder\'on-Zygmund decomposition, applied to $f$ at height $\alpha$, there exists a family $\mathcal{Q}$ of disjoint dyadic cubes such that
\begin{enumerate}
		\item $f=g+b$ with $b=\sum_{s\in\Z}B^{s}$, where $B^{s}=\sum_{Q\in\mathcal{Q}:|Q|=2^{sd}}b_{Q}$, 
		\item $\|g\|_\infty\lesssim\lambda$ and $\|g\|_{1}\leq\|f\|_1$,
		\item $\int b_{Q}(x)dx=0$ and $\supp{b_{Q}}\subseteq Q$,
		\item $\|b_{Q}\|_{1}\lesssim\alpha|Q|$ and $\sum_{Q\in\mathcal{Q}}|Q|\lesssim\frac{1}{\alpha}$. In particular, this implies $\sum_s\|B^{s}\|_{1}\lesssim\|f\|_1$.
\end{enumerate}
We define the exceptional set $E=\cup_{Q\in\mathcal{Q}}4Q$. Thus $|E|\lesssim\frac{\|f\|_1}{\alpha}$. By the $L^2-$boundedness \cite[Theorem 1.2 (ii)]{DHL2017} of the $\lambda-$jump operators associated to $\{T_{\Omega,\epsilon}:\epsilon>0\}$, we have 
\begin{align*}
	|\{x\in\R^n:\lambda[N_\lambda(T_{\Omega,\epsilon} g(x):\epsilon\in (0,\infty))]^\frac{1}{2}>\alpha\}|\lesssim\frac{1}{\alpha^2}\|g\|_2^2\lesssim\frac{\|f\|_1}{\alpha}.
\end{align*}
Therefore it remains to estimate for the bad part $b$. Let $\psi$ be a Schwartz function supported in the annulus $\{\frac{1}{4}\leq|y|\leq1\}$ and $\sum_{j\in\Z}\psi(2^{-j}y)=1$. We decompose the kernel as follows
\[K(y)=\Omega\left(\frac{y}{|y|}\right)|y|^{-d}=\sum_{j\in\Z}\Omega\left(\frac{y}{|y|}\right)|y|^{-d}\psi(2^{-j}y)=\sum_{j\in\Z}K_j(y).\]
We note that by the support of the kernels $K_j$, it follows that
\[\sum_{s<5}\sum_{j\in\Z}(K_j\chi_{|\cdot|>\epsilon})*B^{j-s}(x)=0,\quad\text{for}\; x\in E^c.\]
Thus, we have that
\[\lambda[N_\lambda(T_{\Omega,\epsilon}b(x):\epsilon\in (0,\infty))]^\frac{1}{2}\leq\lambda\left[N_{\lambda}\left(\sum_{s\geq5}\sum_{j\in\Z}(K_j\chi_{|\cdot|>\epsilon})*B^{j-s}(x):\epsilon\in (0,\infty)\right)\right]^\frac{1}{2}.\]
For each $s\geq5$, we write $K_j=S_j^{s}+H_j^{s}$, where
\begin{align*}
	S_j^{s}(y)&=K_j(y)\chi_{\left\{\Omega\left(\frac{y}{|y|}\right)>2^{\delta_0 s}\|\Omega\|_1\right\}}(y),\\
	H_j^{s}(y)&=K_j(y)\chi_{\left\{\Omega\left(\frac{y}{|y|}\right)\leq2^{\delta_0 s}\|\Omega\|_1\right\}}(y),
\end{align*}
for some fixed $\delta_0>0$. Thus, we obtain
\begin{align*}
	\lambda[N_\lambda(T_{\Omega,\epsilon}b:\epsilon\in (0,\infty))]^\frac{1}{2}(x)\leq&\lambda\left[N_\frac{\lambda}{2}\left(\sum_{s\geq5}\sum_{j\in\Z}(S_j^s\chi_{|\cdot|>\epsilon})*B^{j-s}(x):\epsilon\in (0,\infty)\right)\right]^\frac{1}{2}\\
	&+\lambda\left[N_\frac{\lambda}{2}\left(\sum_{s\geq5}\sum_{j\in\Z}(H_j^s\chi_{|\cdot|>\epsilon})*B^{j-s}(x):\epsilon>0\right)\right]^\frac{1}{2}.
\end{align*}
For the term involving operators $S_j^s$ in the above, by the definition of $\lambda-$jumps and variation, we have that
\begin{align*}
	&\lambda\left[N_\frac{\lambda}{2}\left(\sum_{s\geq5}\sum_{j\in\Z}(S_j^s\chi_{|\cdot|>\epsilon})*B^{j-s}(x):\epsilon\in (0,\infty)\right)\right]^\frac{1}{2}\\
	\lesssim& V_1\left(\sum_{s\geq5}\sum_{j\in\Z}(S_j^s\chi_{|\cdot|>\epsilon})*B^{j-s}(x):\epsilon\in (0,\infty)\right)\\
	\leq&|S_j^s|*|B^{j-s}|(x).
\end{align*}
Hence, we use Chebyshev's inequality to obtain
\begin{align*}
	&\left|\left\{x\in E^c:\lambda\left[N_\frac{\lambda}{2}\left(\sum_{s\geq5}\sum_{j\in\Z}(S_j^s\chi_{|\cdot|>\epsilon})*B^{j-s}(x):\epsilon\in (0,\infty)\right)\right]^\frac{1}{2}>\frac{\alpha}{2}\right\}\right|\\
	\lesssim&\frac{1}{\alpha}\sum_{s\geq5}\sum_{j\in\Z}\|S_j^s\|_1\|B^{j-s}\|_1\\
	\lesssim&\frac{1}{\alpha}\sum_{s\geq5}\sum_{j\in\Z}\|B^{j-s}\|_1\int_{\Omega(\theta)>2^{\delta_0 s}\|\Omega\|_1}\Omega(\theta)d\theta\\
	\lesssim&\frac{1}{\alpha}\|\Omega\|_{L\log L}\|f\|_1.
\end{align*}
For the term involving operators $H_j^s$, we dominate the $\lambda-$jumps by the sum of short and long $\lambda-$jumps. This is achieved via a standard argument; for a proof, see \cite[Lemma 1.3]{JSW2008}. As a consequence, we obtain the following pointwise inequality: 
\begin{align}\label{shortlong}
	&\lambda\left[N_\frac{\lambda}{2}\left(\sum_{s\geq5}\sum_{j\in\Z}(H_j^s\chi_{|\cdot|>\epsilon})*B^{j-s}(x):\epsilon\in (0,\infty)\right)\right]^\frac{1}{2}\nonumber\\
	\lesssim&~\mathcal{S}_2\left(\sum_{s\geq5}\sum_{j\in\Z}(H_j^s\chi_{|\cdot|>\epsilon})*B^{j-s}(x):\epsilon\in (0,\infty)\right)\nonumber\\
	&+\lambda\left[N_\frac{\lambda}{2}\left(\sum_{s\geq5}\sum_{j\in\Z}(H_j^s\chi_{|\cdot|>2^k})*B^{j-s}(x):k\in\Z\right)\right]^\frac{1}{2},
\end{align}
where $\mathcal{S}_2$ is the short jump operator defined as 
\[\mathcal{S}_2(F_\epsilon:\epsilon>0)(x)=\left(\sum\limits_{k\in\Z}|V_2(F_\epsilon:\epsilon\in[2^{k-1},2^{k}])(x)|^2\right)^\frac{1}{2}.\]
The second term in the expression \eqref{shortlong} is known as the long jump function. 
\subsection{Estimate for short jumps} Since the support of $H_j^s$ is contained in the annulus $\{2^{j-2}\leq|y|\leq2^j\}$, the estimate of $\mathcal{S}_2(T_{\Omega,\epsilon}b:\epsilon\in (0,\infty))$ is reduced to proving
\begin{equation*}
	\left|\left\{x\in E^c:\sum_{s\geq5}\left(\sum\limits_{k\in\Z}|V_2((H^s_{k+j}\chi_{|\cdot|>\epsilon})*B^{k+j-s}(x):\epsilon\in[2^{k-1},2^{k}])|^2\right)^\frac{1}{2}>\alpha\right\}\right|\lesssim\frac{\|f\|_1}{\alpha},
\end{equation*}
for $j=0,1$. We will prove the inequality for $j=0$, the other estimate is similar. By Chebyshev's and triangle inequality, it is enough to prove the following for $k\in\Z$ and $s\geq5$,
\begin{equation*}
	\left\|V_2((H_k^s\chi_{|\cdot|>\epsilon})*B^{k-s}:\epsilon\in[2^{k-1},2^{k}])\right\|_2^2\lesssim2^{-\delta' s}\alpha\sum_{Q\in\mathcal{Q}:|Q|=2^{(k-s)n}}\|b_Q\|_1,
\end{equation*}
for some $\delta'>0$. Recalling the definition of $V_2$, the above inequality is equivalent to
\begin{equation*}
	\left\|\left(\sum_{i=1}^{N^k(x)}|H^s_{k,E_i^k(x)}*B^{k-s}(x)|^2\right)^\frac{1}{2}\right\|^2\lesssim 2^{-\delta' s}\alpha\sum_{Q\in\mathcal{Q}:|Q|=2^{(k-s)d}}\|b_Q\|_1,
\end{equation*}
where $H^s_{k,E_i^k(x)}=H^s_k\chi_{\epsilon_{i-1}^k(x)\leq|\cdot|\leq\epsilon_{i}^k(x)}$ for a measurable choice of functions $N^k(x)$ and $\epsilon_i^k(x),\;i=0,\cdots,N^k(x)$ with $2^{k-1}\leq\epsilon_0^k(x)<\epsilon_1^k(x)<\cdots<\epsilon_N^k(x)(x)\leq2^k$. We define the functions $B^{k-s}_{i,in}$ and $B^{k-s}_{i,bd}$ as follows
\begin{equation*}
	B^{k-s}_{i,in}=\sum_{\substack{Q\in\mathcal{Q}:\\|Q|=2^{(k-s)d}\\(x-Q)\subseteq E_i^k(x)}}b_Q,\qquad B^{k-s}_{i,bd}=\sum_{\substack{Q\in\mathcal{Q}:\\|Q|=2^{(k-s)d}\\(x-Q)\cap  (E_i^k(x))^c\neq\phi}}b_Q,
\end{equation*}
for $E_i^k(x)=[\epsilon_{i-1}^k(x),\epsilon_{i}^k(x)]$. Thus, we have
\begin{align*}
	&\left\|\left(\sum_{i=1}^{N^k(x)}|H^s_{k,E_i^k(x)}*B^{-s}(x)|^2\right)^\frac{1}{2}\right\|_2^2\\
	\lesssim&\left\|\left(\sum_{i=1}^{N^k(x)}|H^s_{k,E_i^k(x)}*B^{k-s}_{i,in}(x)|^2\right)^\frac{1}{2}\right\|_2^2+\left\|\left(\sum_{i=1}^{N^k(x)}|H^s_{k,E_i^k(x)}*B^{k-s}_{i,bd}(x)|^2\right)^\frac{1}{2}\right\|_2^2\\
	=:&I_{in}+I_{bd}.
\end{align*}
\subsubsection{Estimate for $I_{bd}$:} We note that $|H^s_{k,E_i^k(x)}|\lesssim\|\Omega\|_12^{\delta_0 s}2^{-kd}$ and the maximum number of cubes $Q\in\mathcal{Q}$ with side length $2^{k-s}$ such that $(x-Q)\cap\partial E_i^k(x)\neq\phi$ is proportional to $2^{(s)(d-1)}$. Thus, we have
\begin{align*}
	|H^s_{k,E_i^k(x)}*B_{i,bd{^{-s}}}(x)| &\lesssim\|\Omega\|_12^{\delta_0 s}2^{-kd}\sum_{\substack{Q\in\mathcal{Q}:\\|Q|=2^{(k-s)d}\\(x-Q)\cap(E_i^k(x))^c\neq\phi}}\|b_Q\|_1\\
	&\lesssim\alpha\|\Omega\|_12^{\delta_0 s}\sum_{\substack{Q\in\mathcal{Q}:\\|Q|=2^{(k-s)d}\\(x-Q)\cap(E_i^k(x))^c\neq\phi}}|Q| \\ &\lesssim\alpha\|\Omega\|_12^{-(1-\delta_0)s}.
\end{align*}
Therefore, it follows that
\begin{align*}
	I_{bd}&\lesssim\int\sum_{i=1}^{N^k(x)}|H^s_{k,E_i^k(x)}*B^{k-s}_{i,bd}(x)|^2dx\\
	&\lesssim\alpha\|\Omega\|_12^{-(1-\delta_0)s}\int\sum_{i=1}^{N^k(x)}|H^s_{k,E_i^k(x)}*B^{k-s}_{i,bd}(x)|dx\\
	&\lesssim\alpha\|\Omega\|_12^{-(1-\delta_0)s}\int\sum_{i=1}^{N^k(x)}|H^s_{k,E_i^k(x)}|*\left(\sum_{Q\in\mathcal{Q}:|Q|=2^{(k-s)d}}|b_Q|\right)(x)dx\\
	&\lesssim\alpha\|\Omega\|_1^22^{-(1-2\delta_0)s}\sum_{Q\in\mathcal{Q}:|Q|=2^{(k-s)d}}\|b_Q\|_1.
\end{align*}
\subsubsection{Estimate for $I_{in}$:} We begin by noting that 
\begin{align*}
	H^s_{k,E_i^k(x)}*B^{k-s}_{i,in}(x)&=\sum_{\substack{Q\in\mathcal{Q}:\\|Q|=2^{(k-s)d}\\(x-Q)\subseteq E_i^k(x)}}K_k*b_Q(x)\\
	&=\sum_{m\in\Z^d}K_{k}*B^{k-s,m}_{i,in}(x),\;\text{where}
\end{align*}
\begin{equation*}
	B^{k-s,m}_{i,in}=\sum_{\substack{Q\in\mathcal{Q}:|Q|=2^{(k-s)d},\\Q\subseteq 2^k(m+[0,1]^d)\\(x-Q)\subset E_i^k(x)}}b_Q,\quad \text{and}\quad B^{k-s,m}=\sum_{\substack{Q\in\mathcal{Q}:|Q|=2^{(k-s)d},\\Q\subseteq 2^k(m+[0,1]^d)}}b_Q.
\end{equation*}
We note the above quantities are defined only for those admissible $m\in\Z^d$ for which atleast one such $Q$ exists. The following single scale estimate with decay in the parameter $s$ holds via \Cref{lemma:Seegercombined}.
\begin{equation}\label{est:singlescaledecay}
	\|K_k*\widetilde{B}^{k-s,m}\|_2^2\lesssim 2^{-(\delta-\delta_0)s}\alpha\sum_{\substack{Q\in\mathcal{Q}:|Q|=2^{(k-s)d}\\Q\subseteq 2^k(m+[0,1]^d)}}\|b_Q\|_1,
\end{equation}
where $\widetilde{B}^{k-s,m}=\sum\limits_{\substack{Q\in\mathcal{Q}:|Q|=2^{(k-s)d}\\Q\subseteq 2^k(m+[0,1]^d)}}\epsilon_Qb_Q$ for any fixed choice of $\epsilon_Q\in\{-1,0,1\}$. 

This allows us to use the variational Rademacher-Menshov \Cref{VarRM}. To do so, we first observe that by spatial localization of the bad cubes $Q$, the number of $b_Q$'s appearing in the definition of $B^{k-s,m}_{i,in}$ is atmost $2^{sd}$ for each fixed $m\in\Z^d$, $x\in\R^d$, and $i=1,\cdots,N^k(x)$. We also note that the sequence of functions $\{H^s_{k}*B^{k-s,m}_{i,in}(x)\}_{m\in\Z}$ have bounded overlap. Hence, we have
\begin{align*}
	I_{in}&\lesssim \int \sum_{i=1}^{N^k(x)}\left|\sum_{m\in\Z^d}H^s_{k}*B^{k-s,m}_{i,in}(x)\right|^2dx\\
	&\lesssim\sum_{m\in\Z^d}\int \sum_{i=1}^{N^k(x)}\left|H^s_{k}*B^{k-s,m}_{i,in}(x)\right|^2dx\\
	&\lesssim2^{-(\delta-\delta_0)s}s^2\alpha\sum_{m\in\Z^d}\sum_{\substack{Q\in\mathcal{Q}:|Q|=2^{(k-s)d}\\Q\subseteq 2^k(m+[0,1]^d)}}\|b_Q\|_1\\
	&\lesssim2^{-(\delta-\delta_0)s}s^2\alpha\sum_{Q\in\mathcal{Q}:|Q|=2^{(k-s)d}}\|b_Q\|_1,
\end{align*}
where we used \Cref{VarRM} in the second last step. Choosing $\delta'<\min\{1-2\delta_0,(\delta-\delta_0)/2\}$ concludes the proof for short jumps.
\subsection{Estimate for long jumps}In this section we prove the weak $(1,1)$ bounds for the long jumps. We note that 
\begin{align*}
	&\lambda\left[N_\frac{\lambda}{2}\left(\sum_{s\geq5}\sum_{j\in\Z}(H_j^s\chi_{|\cdot|>2^k})*B^{j-s}:k\in\Z\right)\right]^\frac{1}{2}(x)\\
	\lesssim&\lambda\Big[N_{\frac{\lambda}{2}}\Big(\sum_{s\geq5}\sum_{j\geq k}H_j^s*B^{j-s}:k\in\Z\Big)(x)\Big]^\frac{1}{2}\\
	&+\lambda\Big[N_{\frac{\lambda}{2}}\Big(\sum_{s\geq5}(H_k^s\chi_{2^{k-1}\leq|.|\leq2^k})*B^{k-s}:k\in\Z\Big)(x)\Big]^\frac{1}{2}\\
	=:&L_1+L_2
\end{align*}
\subsubsection{Estimate for $L_2$:} The estimate for $L_2$ is comparatively easier as there is no interaction between scales $k$. Indeed, by Chebyshev's and triangle inequality, we have
\begin{align*}
	&\left|\left\{\lambda\Big[N_{\frac{\lambda}{2}}\Big(\sum_{s\geq5}(H_k^s\chi_{2^{k-1}\leq|.|\leq2^k})*B^{k-s}:k\in\Z\Big)(x)\Big]^\frac{1}{2}>\frac{\alpha}{4}\right\}\right|\\
	\lesssim&\frac{1}{\alpha^2}\left(\sum_{s\geq5}\left(\sum_{k\in\Z}\|(H_k^s\chi_{2^{k-1}\leq|.|\leq2^k})*B^{k-s}\|_2^2\right)^\frac{1}{2}\right)^2\\
	\lesssim&\frac{\|\Omega\|_1}{\alpha}\left(\sum_{s\geq5}2^{-(\epsilon-\delta_0)s/2}\left(\sum_{k\in\Z}\|B^{k-s}\|_1\right)^\frac{1}{2}\right)^2\\
	\lesssim&\frac{1}{\alpha}\|\Omega\|_1\|f\|_1,
\end{align*}
where we have used \Cref{lemma:Seegercombined} in the second last step.
\subsubsection{Estimate for $L_1$:} For the estimate of $L_1$, we note that
\begin{align*}
	&\left|\left\{\lambda\Big[N_{\frac{\lambda}{2}}\Big(\sum_{s\geq5}\sum_{j\geq k}H_j^s*B^{j-s}:k\in\Z\Big)(x)\Big]^\frac{1}{2}>\frac{\alpha}{4}\right\}\right|\\
	&\leq\left|\left\{\sum_{s\geq5}V_2\left(\sum_{j\geq k}H_j^s*B^{j-s}:k\in\Z\right)(x)>\frac{\alpha}{4}\right\}\right|\\
	&\leq\sum_{s\geq5}\left|\left\{V_2\left(\sum_{j\geq k}H_j^s*B^{j-s}:k\in\Z\right)(x)>{c_12^{-\delta_1s}\alpha}\right\}\right|,
\end{align*}
where $c_1,\delta_1$ are fixed constants such that $c_1\sum_{s\geq5}2^{-\delta_1 s}=\frac{1}{4}$. 

We will work with a fixed $s\geq5$. We set $K_{j,m}=2^j([0,1)^d+m)$. As earlier, we write 
\[H_j^s*B^{j-s}=\sum_{m\in\Z^d}H_j^s*B^{j-s,m},\quad \text{where}\; B^{j-s,m}=\sum_{|Q|=2^{(j-s)d}:Q\subseteq K_{j,m}}b_Q.\]
We note that each function $H_j^s*B^{j-s,m}$ is supported in the cube $2K_{j,m}$. It is easy to see that for each fixed $j\in\Z$, the cubes $\{2K_{j,m}\}_{m\in\Z^d}$ have bounded overlap but they lack the nesting structure. We will further subdivide the cubes $\{K_{j,m},j\in\Z,m\in\Z^d\}$ into finitely many baskets based on the shifted dyadic grids defined as following. For $u\in\{0,\frac{1}{3},\frac{2}{3}\}^d$, we define the dyadic grids $\mathcal{D}_u$ as  
\[\mathcal{D}_u=\{2^j([0,1]^d+m+u),\;j\in\Z,\;m\in\Z^d\}.\]
We will require the following fact from \cite[Lemma 2.5]{HLP2013}.
\begin{lemma}[\cite{HLP2013}]
	For any cube $K$, there exists $u \in\left\{0, \frac{1}{3}, \frac{2}{3}\right\}^d$ and $R \in \mathcal{D}_u$ such that $K \subset R,\;2K \subset \widetilde{R}$, and $3 \ell(K)<\ell(R) \leq6\ell(K)$, where $\widetilde{R}$ denote the unique cube in $\mathcal{D}_u$ containing $R$ with twice the length of $R$.
\end{lemma}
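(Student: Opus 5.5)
The plan is to reduce to one dimension, coordinate by coordinate, and use a pigeonhole argument on the endpoints of the three shifted grids. I read $\mathcal{D}_u$ with the normalisation of \cite{HLP2013}: the level-$j$ cubes are $2^j([0,1)^d+m+(-1)^ju)$. With this normalisation each $\mathcal{D}_u$ is a genuine nested dyadic system, which is needed for $\widetilde{R}$ to be the unique parent of $R$. Since $\mathcal{D}_u$ is a product of one-dimensional grids $\mathcal{D}_{u_i}$, and $K$, $2K$, $R$, $\widetilde{R}$ are products of intervals, it suffices to prove the following. For an interval $I$ of length $\ell$, there is $u_i\in\{0,\frac13,\frac23\}$ such that the level-$j$ interval of $\mathcal{D}_{u_i}$ containing $I$ has parent containing $2I$. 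Here $j$ is the same for all coordinates, defined by $3\ell<2^j\le 6\ell$; such a $j$ exists and is unique because $(3\ell,6\ell]$ is a dyadic window. This immediately gives $3\ell(K)<\ell(R)\le 6\ell(K)$.

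In one dimension, let $P^j_{u}$ denote the set of endpoints of level-$j$ intervals of $\mathcal{D}_u$, so $P^j_u=2^j(\Z+(-1)^ju)$.

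First, because of nesting, $P^{j+1}_u\subseteq P^j_u$. Hence it suffices to find $u$ with $P^j_u\cap\operatorname{int}(2I)=\emptyset$. Indeed, $2I$ is then contained in the closure of a single level-$(j+1)$ interval $\widetilde{R}$, since $2I$ has length $2\ell<2^{j+1}$ and meets no level-$(j+1)$ endpoint in its interior. Likewise $I\subset 2I$ lies in a single level-$j$ interval $R$, and $R$ must be the child of $\widetilde{R}$ containing it.

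Second, the three sets $P^j_0,P^j_{1/3},P^j_{2/3}$ are pairwise disjoint, and their union is the lattice $\frac{2^j}{3}\Z$. This is because $(-1)^ju\in\{0,\pm\frac13,\pm\frac23\}$ runs through the three residues mod $1$ of $\frac13\Z$.

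The open interval $\operatorname{int}(2I)$ has length $2\ell<\frac{2}{3}2^j$, which is less than twice the lattice spacing $\frac{2^j}{3}$. So it contains at most two lattice points, and these rule out at most two of the three shifts. The remaining shift $u_i$ works, and doing this in each coordinate produces the required $u=(u_1,\dots,u_d)$ and $R\in\mathcal{D}_u$.

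The only delicate point I anticipate is the boundary and closedness convention. The paper writes $[0,1]^d$, which makes cubes closed and membership ambiguous on the boundary. I will work with half-open cubes, or simply note that $K\subset\overline{R}$ is all that is used later. The other point to check is the nesting of $\mathcal{D}_u$, which fails for the literal formula without the factor $(-1)^j$. I would state this convention explicitly, since it is implicit in the phrase ``unique cube in $\mathcal{D}_u$ containing $R$ with twice the length''.
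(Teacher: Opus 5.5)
Your proof is correct. The paper gives no argument for this lemma; it is quoted from \cite{HLP2013}. So your proof works as a self-contained replacement for the citation. It is the standard one-third trick, run coordinatewise at a common level $j$ with $3\ell<2^j\le 6\ell$:
\begin{itemize}
\item the three shifted level-$j$ endpoint sets partition $\frac{2^j}{3}\Z$;
\item an interval of length $2\ell<2\cdot\frac{2^j}{3}$ meets at most two of them;
\item the inclusion $P^{j+1}_u\subseteq P^j_u$ gives the parent condition $2K\subset\widetilde R$ with no extra work.
\end{itemize}

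Your normalisation remark is a genuine correction to the text, not just a convention. With the paper's displayed definition $\mathcal{D}_u=\{2^j([0,1]^d+m+u)\}$, the grids are not nested for $u\neq 0$. For example, take $d=1$ and $u=\frac13$. The level-$1$ endpoint $\frac23$ is not a level-$0$ endpoint, so $[\frac23,\frac83]$ partially overlaps $[\frac13,\frac43]$. The long-jump argument relies on any two cubes $\widetilde K_{j,m}$ being either disjoint or nested. So the factor $(-1)^j$ from \cite{HLP2013} is needed there as well as in this lemma.

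The boundary issue you flag is not actually delicate. Since $3\ell<2^j$ is strict, even the closed interval $2I$, of length $2\ell<2\cdot\frac{2^j}{3}$, contains at most two points of $\frac{2^j}{3}\Z$. You can therefore choose $u_i$ with $P^j_{u_i}\cap 2I=\emptyset$. Then $2I\subset\operatorname{int}\widetilde R$ and $I\subset\operatorname{int}R$, so the conclusion holds whether cubes are closed or half-open. There is no need to fall back on $K\subset\overline{R}$.
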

By the above lemma, for each $K_{j,m}$, there exists a $u\in\left\{0, \frac{1}{3}, \frac{2}{3}\right\}^d$ and $\widetilde{K}_{j,m}\in\mathcal{D}_u$ such that $2K_{j,m}\subset\widetilde{K}_{j,m}$ and $|\widetilde{K}_{j,m}|\lesssim|{K}_{j,m}|$. Based on this observation, we have the following decomposition:
\[B^{j-s,m}=\sum_{u\in\{0,\frac{1}{3},\frac{2}{3}\}^d}B^{j-s,m}_u,\;\text{where}\]
\[B^{j-s,m}_u=\begin{cases} 
      			B^{j-s,m}, & \;\text{if}\;\widetilde{K}_{j,m}\in\mathcal{D}_u \\
      			0, & \;\text{otherwise}.
   				\end{cases}
\]
Thus, it is enough to estimate
\[\left|\left\{V_2\left(\sum_{j\geq k}\sum_{m\in\Z^d}H_j^s*B^{j-s,m}_u(x):k\in\Z\right)>{c_1 3^{-d} 2^{-\delta_1s}\alpha}\right\}\right|,\]
for a fixed $u\in\left\{0,\frac{1}{3},\frac{2}{3}\right\}^d$. The advantage of the above expression is that $\supp{H_j^s*B^{j-s}_u}\subset\widetilde{K}_{j,m}$, where $\widetilde{K}_{j,m}\in\mathcal{D}_u$ and any such two $\widetilde{K}_{j,m}$ and $\widetilde{K}_{j',m'}$ are either disjoint or one is contained in another. We will now discard the points where there is a large interaction on the scales $j$. This leaves us with points having a controlled overlap in the scales with respect to the parameter $s$. To do so, we initially set 
\begin{align*}
	\mathcal{C}_0^u&=\{(j,m):\exists Q\in\mathcal{Q},|Q|=2^{(j-s)d},\;Q\subseteq K_{j,m}\;\text{and}\;\widetilde{K}_{j,m}\in\mathcal{D}_u\},\;\text{and}\\
	\mathcal{X}^u&=\bigcup\limits_{(j,m)\in\mathcal{C}^u_0}\widetilde{K}_{j,m}.
\end{align*}
It is easy to see that $\supp{\sum_{j\in\Z}\sum_{m\in\Z^d}H_j^s*B^{j-s,m}_u}\subseteq \mathcal{X}^u$. We decompose the set $\mathcal{X}^u=\mathcal{X}^u_1+\mathcal{X}^u_2$, where
\[\mathcal{X}^u_1=\left\{x\in\mathcal{X}^u:\sum_{(j,m)\in\mathcal{C}^u_0}\chi_{\widetilde{K}_{j,m}}(x)>2^{s(d+\epsilon_0)}\right\},\quad\text{and}\quad\mathcal{X}^u_2=\mathcal{X}^u\setminus\mathcal{X}^u_1,\]
and $\epsilon_0>0$ is a small fixed constant. Here, we would like to point out that the choice of the sublevel set $\mathcal{X}^u_1$ based on $\epsilon_0>0$ allows us to do away with the recursion argument employed in \cite{KL2020}. We write
\begin{align*}
	&\left|\left\{V_2\left(\sum_{j\geq k}\sum_{m\in\Z^d}H_j^s*B^{j-s,m}_u(x):k\in\Z\right)>c_1 3^{-d} 2^{-\delta_1s}\alpha\right\}\right|\\
	\lesssim&|\mathcal{X}^u_1|+\left|\left\{x\in\mathcal{X}^u_2:V_2\left(\sum_{j\geq k}\sum_{m\in\Z^d}H_j^s*B^{j-s,m}_u(x):k\in\Z\right)>c_1 3^{-d} 2^{-\delta_1s}\alpha\right\}\right|.
\end{align*}
We note that for any $(j,m)\in\mathcal{C}^u_0$, there exists atleast one cube $Q_{j.m}\in\mathcal{Q}$ with $|Q_{j,m}|=2^{(j-s)d}$ and $Q_{j,m}\subset K_{j,m}$. Thus, by Chebyshev's inequality, we have
\begin{align*}
	|\mathcal{X}^u_1|\leq\frac{1}{2^{s(d+\epsilon_0)}}\sum_{(j,m)\in\mathcal{C}^u_0}|\widetilde{K}_{j,m}|\lesssim\frac{1}{2^{s\epsilon_0}}\sum_{(j,m)\in\mathcal{C}^u_0}|Q_{j,m}|\lesssim\frac{2^{-s\epsilon_0}}{\alpha}\sum\|b_Q\|_1.
\end{align*}
Let $\mathcal{C}^u=\{(j,m)\in\mathcal{C}_0^u:\widetilde{K}_{j,m}\nsubseteq\mathcal{X}^u_1\}$. Then, it is easy to see that for $x\in\mathcal{X}^u_2$, we have
\[\sum_{j\geq k}\sum_{m\in\Z}H_j^s*B^{j-s,m}_u(x)=\sum_{(j,m)\in\mathcal{C}^u,\;j\geq k}H_j^s*B^{j-s,m}_u(x).\]
The advantage of working with the collection $\mathcal{C}^u$ is that for any fixed $(j,m)\in\mathcal{C}^u$, there are atmost $2^{s(d+\epsilon_0)}$ indices $(j',m')\in\mathcal{C}^u$ such that $\widetilde{K}_{j,m}\subset \widetilde{K}_{j',m'}$. Based on this observation, we subdivide the collection $\mathcal{C}^u$ as follows. For any $(j,m)\in\mathcal{C}^u$, we say $(j,m)\in \mathcal{C}^u_1$ if there is no other index $(j',m')\in\mathcal{C}^u$ such that $\widetilde{K}_{j',m'}\subseteq \widetilde{K}_{j,m}$. Inductively, we define $\mathcal{C}^u_l$ to be the collection of all indices $(j,m)\in\mathcal{C}^u\setminus\cup_{i=1}^{l-1}\mathcal{C}^u_i$ such that there does not exist any index $(j,m)\in\mathcal{C}^u\setminus\cup_{i=1}^{l-1}\mathcal{C}^u_i$ such that $\widetilde{K}_{j',m'}\subset \widetilde{K}_{j,m}$. By the observation before, the process terminates at the step $l=2^{s(d+\epsilon_0)}$ and we have $\mathcal{C}^u=\bigcup\limits_{l=1}^{2^{s(d+\epsilon_0)}}\mathcal{C}^u_l$. Thus, we have that
\begin{align*}
	&\left|\left\{x\in\mathcal{X}^u_2:V_2\left(\sum_{j\geq k}\sum_{m\in\Z^d}H_j^s*B^{j-s,m}_u(x):k\in\Z\right)>c_1 3^{-d} 2^{-\delta_1s}\alpha\right\}\right|\\
	&\lesssim\frac{2^{2\delta_1s}}{\alpha^2}\left\|V_2\left(\sum_{(j,m)\in\mathcal{C}^u}H_j^s*B^{j-s,m}_u:k\in\Z\right)\right\|_2^2\\
	&=\frac{2^{2\delta_1s}}{\alpha^2}\left\|V_2\left(\sum_{i=1}^{l}\left(\sum_{(j,m)\in\mathcal{C}^u_i}H_j^s*B^{j-s,m}_u\right):1\leq l\leq2^{s(d+\epsilon_0)}\right)\right\|_2^2\\
	&\lesssim\frac{s^22^{2\delta_1s}}{\alpha^2}\sup_{\varepsilon_l\in\{-1,0,1\}}\left\|\sum_{l=1}^{2^{s(d+\epsilon_0)}}\varepsilon_l\left(\sum_{(j,m)\in\mathcal{C}^u_l}H_j^s*B^{j-s,m}_u\right)\right\|_2^2\\
	&=\frac{s^22^{2\delta_1s}}{\alpha^2}\sup_{\varepsilon_l\in\{-1,0,1\}}\left\|\sum_{j\in\Z}H_j^s*\left(\sum_{l=1}^{2^{s(d+\epsilon_0)}}\sum_{m:(j,m)\in\mathcal{C}^u_l}\varepsilon_lB^{j-s,m}_u\right)\right\|_2^2\\
	&\lesssim\frac{s^2 2^{-s(\delta-2\delta_0-2\delta_1)}}{\alpha}\left\|\Omega\right\|_1\sum_{Q\in\mathcal{Q}}\|b_Q\|_1,
\end{align*}
where we used \Cref{lemma:Seegercombined} in the last step.
Choosing $\delta_0,\delta_1\leq \frac{\delta}{100}$ and summing in $u$ and $s$ concludes the proof.
\appendix
\section{Microlocal estimates for rough singular integrals}\label{sec:microlocal}

\subsection{Estimates of Seeger:} Let $H_0$ be a function such that
\begin{itemize}
	\item $\supp{H_0}\subseteq\{2^{-1}\leq|x|\leq2\}$.
	\item For $N\in\N$, we have 
	\[\sup_{0\leq l\leq N}r^{d+l}|\frac{\partial^l}{\partial r^l}H_0(r\theta)|\leq\mathscr{M},\]
	uniformly in $r$ and $\theta$. 
\end{itemize}
We set $H_j(x)=2^{-jd}H_0(2^{-j}x)$. For $s\geq5$, let $\mathscr{E}^s=\{e_v^s\}$ be a collection of unit vectors such that 
\begin{itemize}
	\item $|e_{v_1}^s-e_{v_2}^s|>2^{-s-10}d^{-1}$ for all $e_{v_1}^s,e_{v_2}^s\in\mathscr{E}^s$.
	\item For each $\theta\in\Sp^{d-1}$, there exists $e_v^s\in\mathscr{E}^s$ such that $|\theta-e_v^s|\leq2^{-s-1}$.
\end{itemize}
It is easy to see that there exists collection of disjoint measurable sets $E_v^s\subset\Sp^{d-1}$ such that $e_v^s\in E_v^s$, $\sigma(E_v^s)\leq2^{-s(d-1)}$ and $\cup_v E_v^s=\Sp^{d-1}$. Let $H_{j,v}^s(x)=H_j(x)\chi_{E_v^s}(x/|x|)$. For a fixed parameter $\kappa$ with $0<\kappa<1$, we have the following decomposition from\cite[(2.2)]{Seeger1996},
	\[H_j=\Gamma_j^s+(H_j-\Gamma_j^s),\]
	where $\widehat{\Gamma_j^s}(\xi)=\sum_v \psi(2^{s(1-\kappa)}\langle \xi,e_v^s\rangle/|\xi|)\widehat{H_{j,v}^s}(\xi)$ and $\psi\in C_c^\infty(\R)$ is supported in $[-4,4]$ and $\psi(t)=1$ for $t\in[-2,2]$.
	
	We have the following estimates, which is essentially an amalgamation of Lemmas 2.1 and 2.2 in \cite{Seeger1996}.
	\begin{lemma}[\cite{Seeger1996}]\label{lemma:Seeger}
		Let $\alpha>0$ and $\mathcal{Q}$ be a collection of cubes with disjoint interior such that for each $Q\in\mathcal{Q}$ there exists a function $b_Q\in L^1(Q)$ such that
		\[\int|b_Q(x)|dx\leq\alpha|Q|.\]
		Let $B^k=\sum_{Q\in\mathcal{Q}:|Q|=2^{dk}}b_Q$. Then, for $s\geq5$, we have
		\[\left\|\sum_{j\in\Z}\Gamma_j^s*B^{j-s}\right\|_2^2\lesssim\mathscr{M}^22^{-s(1-\kappa)}\alpha\sum_{Q\in\mathcal{Q}}\|b_Q\|_1.\]
		Moreover, if $\int b_Q=0$ for each $Q\in\mathcal{Q}$. Then, for $N\geq d+1$ and $0\leq\kappa'\leq1$, we have
		\[\left\|\sum_{j\in\Z}(H_j-\Gamma_j^s)*B^{j-s}\right\|_1\lesssim\mathscr{M}(2^{-s\kappa'}+2^{s(d+(\kappa'-\kappa)N)})\sum_{Q\in\mathcal{Q}}\|b_Q\|_1.\]
	\end{lemma}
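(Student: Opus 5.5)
The plan is to prove the two estimates separately, following Seeger's two lemmas. For each $j$ we write $\Gamma_j^s=\sum_v\Gamma_{j,v}^s$, where $\widehat{\Gamma_{j,v}^s}(\xi)=\psi(2^{s(1-\kappa)}\langle\xi,e_v^s\rangle/|\xi|)\widehat{H^s_{j,v}}(\xi)$. The basic geometric fact is that $H_{j,v}^s$ is supported in a tube of length $\sim2^j$ in the direction $e_v^s$ and cross-section $\sim(2^{j-s})^{d-1}$, with
\[\|H_{j,v}^s\|_\infty\lesssim\mathscr{M}2^{-jd},\qquad \|H_{j,v}^s\|_1\lesssim\mathscr{M}2^{-s(d-1)}.\]
Consequently, for $|Q|=2^{(j-s)d}$, the function $|H_{j,v}^s|*|b_Q|$ lives on a tube of comparable size and is bounded pointwise by $\mathscr{M}2^{-jd}\|b_Q\|_1\le\mathscr{M}\alpha2^{-sd}$.

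\emph{$L^2$ estimate.} For fixed $\xi$, the multipliers $\psi(2^{s(1-\kappa)}\langle\xi,e_v^s\rangle/|\xi|)$ are nonzero only for those $v$ with $e_v^s$ in a band of width $\sim2^{-s(1-\kappa)}$ around the great sphere $\xi^\perp$. Since the $e_v^s$ are $2^{-s}$-separated, there are at most $\lesssim2^{s(d-2)}2^{s\kappa}$ such $v$. Plancherel and Cauchy--Schwarz in $v$ then give
\[\Big\|\sum_j\Gamma_j^s*B^{j-s}\Big\|_2^2\lesssim 2^{s(d-2+\kappa)}\sum_v\Big\|\sum_j |H^s_{j,v}|*|B^{j-s}|\Big\|_2^2 .\]
Next we expand the square for each fixed $v$. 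The diagonal-in-scale terms, and the terms with $j'\le j$, are handled by putting the larger-scale factor in $L^\infty$ and the smaller-scale one in $L^1$. The key input is the disjointness of the cubes: the tubes $Q+\mathrm{supp}\,H^s_{j',v}$ coming from disjoint $Q$ of a fixed size overlap boundedly along any line. This yields the pointwise bound
\[\sum_{j'\le j}\sum_{Q'}|H^s_{j',v}|*|b_{Q'}|\lesssim\mathscr{M}\alpha2^{-s(d-1)}.\]
Pairing this with $\||H^s_{j,v}|*|b_Q|\|_1\lesssim\mathscr{M}2^{-s(d-1)}\|b_Q\|_1$, summing over $v$ (about $2^{s(d-1)}$ values), and collecting the powers $2^{s(d-2+\kappa)}\cdot2^{s(d-1)}\cdot2^{-2s(d-1)}$ should give $\mathscr{M}^2 2^{-s(1-\kappa)}\alpha\sum_Q\|b_Q\|_1$. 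I expect this bookkeeping of the overlap of tubes across different scales $j'\le j$ to be the main obstacle. If the naive pairing loses powers of $s$ or of $2^s$, I would first try Seeger's device of replacing $\sum_{j'}|H^s_{j',v}|*|B^{j'-s}|$ by a directional maximal function along $e_v^s$ applied to $\sum_Q\alpha\chi_Q$, which is bounded by $\alpha$ times the density of cubes.

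\emph{$L^1$ estimate.} Here we insert a Littlewood--Paley decomposition at frequencies $|\xi|\sim2^{l-j}$ for $l\in\Z$, and split at the threshold $l=s(1-\kappa')$.

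For low frequencies, $l\le s(1-\kappa')$, we do not use the microlocal cut at all. We estimate $H_j*P_{l-j}*b_Q$ and $\Gamma_j^s*P_{l-j}*b_Q$ using the cancellation $\int b_Q=0$: a first-order Taylor expansion of the kernel over $Q$, which has side $2^{j-s}$, gains a factor $2^{j-s}\cdot2^{l-j}=2^{l-s}$ in $L^1$. Summing over $l\le s(1-\kappa')$ gives the term $\mathscr{M}2^{-s\kappa'}\sum_Q\|b_Q\|_1$.

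For high frequencies, $l>s(1-\kappa')$, we use that the multiplier $(1-\psi)(2^{s(1-\kappa)}\langle\xi,e_v^s\rangle/|\xi|)$ forces $|\langle\xi,e_v^s\rangle|\gtrsim2^{-s(1-\kappa)}2^{l-j}$. We integrate by parts $N$ times in the direction $e_v^s$, which is the radial direction on the support of $H^s_{j,v}$ up to an angular error of $2^{-s}$. The radial derivative hypothesis on $H_0$ makes each derivative cost a factor $2^{-j}$. Hence each integration by parts gains $(2^{l-s(1-\kappa)})^{-1}\le2^{-s(\kappa-\kappa')}$ at the threshold, with geometric decay beyond it. This gives an $L^1$ kernel bound of $\mathscr{M}2^{s(\kappa'-\kappa)N}$ per direction, up to a factor polynomial in $2^{s}$ from the number of directions $v$ and from the tube geometry. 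That factor is $\lesssim2^{sd}$, and $N\ge d+1$ ensures the sum over $l$ and $v$ converges. Combining the two ranges and using Young's inequality against $\sum_Q\|b_Q\|_1$ yields the stated bound $\mathscr{M}(2^{-s\kappa'}+2^{s(d+(\kappa'-\kappa)N)})\sum_Q\|b_Q\|_1$.
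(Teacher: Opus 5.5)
\paragraph{A false pointwise bound in the $L^2$ part.} Your outline matches Seeger's argument, which the paper cites rather than reproves. Your exponent count $2^{s(d-2+\kappa)}\cdot2^{s(d-1)}\cdot2^{-2s(d-1)}=2^{-s(1-\kappa)}$ is also right. However, the step that carries the $L^2$ bound is false as stated. The bound $\sum_{j'\le j}\sum_{Q'}|H^s_{j',v}|*|b_{Q'}|\lesssim\mathscr M\alpha2^{-s(d-1)}$ holds for each single $j'$, but not for the sum over $j'$.

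To see why, fix $x$. The sets $x-\operatorname{supp}H^s_{j',v}$ lie in boundedly overlapping dyadic annuli of a single cone of aperture $\sim2^{-s}$. You can fill that cone, annulus by annulus, with disjoint cubes of side $2^{j'-s}$ carrying mass $\sim\alpha|Q|$. Then every scale contributes $\sim\mathscr M\alpha2^{-s(d-1)}$ at $x$, and the sum grows with the number of scales. Disjointness of the cubes does not help, because different scales see different regions. A directional maximal function does not help either, because this is a sum over scales, not a supremum. Your other description (larger scale in $L^\infty$, smaller in $L^1$) also fails, since $\sum_j\sum_{j'\le j}\|B^{j'-s}\|_1$ diverges.

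\paragraph{The fix for the $L^2$ part.} Move the large-scale kernel across before taking a supremum. Write the $(j,j')$ term as $\langle|B^{j-s}|,\,|\widetilde{H^s_{j,v}}|*|H^s_{j',v}|*|B^{j'-s}|\rangle$, with $\widetilde H(x)=H(-x)$. Uniformly in $j'\le j$,
$|\widetilde{H^s_{j,v}}|*|H^s_{j',v}|\le\|H^s_{j,v}\|_\infty\|H^s_{j',v}\|_1\chi_{T_{j,v}}\lesssim\mathscr M^22^{-jd}2^{-s(d-1)}\chi_{T_{j,v}}$,
for one tube $T_{j,v}$ of length $\sim2^j$ and width $\sim2^{j-s}$ along $e^s_v$. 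The entire sum over $j'$ now sits inside a single tube integral. Since all cubes of side $\le2^{j-s}$ are disjoint and those meeting $x-T_{j,v}$ lie in a fixed dilate of it, $\int_{x-T_{j,v}}\sum_{j'\le j}|B^{j'-s}|\lesssim\alpha|T_{j,v}|$. This gives $\bigl\||\widetilde{H^s_{j,v}}|*\sum_{j'\le j}|H^s_{j',v}|*|B^{j'-s}|\bigr\|_\infty\lesssim\mathscr M^2\alpha2^{-2s(d-1)}$. Pair this with $\|B^{j-s}\|_1$ and sum over $j$ and $v$.

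\paragraph{The integration by parts in the $L^1$ part.} Your frequency split and your low-frequency treatment are fine: the localized multipliers $\psi(2^{s(1-\kappa)}\langle\xi,e^s_v\rangle/|\xi|)\beta(2^{j-l}\xi)$ have uniformly integrable kernels, so cancellation gains $2^{l-s}$. But you cannot integrate by parts in the fixed direction $e^s_v$ on $H^s_{j,v}$. The operator $\partial_{e^s_v}$ has a tangential component, and the angular dependence of $H^s_{j,v}$ is merely measurable. That dependence enters through $\chi_{E^s_v}$ and through $H_0$, which is only assumed smooth in $r$. So the $2^{-s}$ angular error is not harmless for differentiation.

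Work ray by ray instead. Write
$\widehat{H^s_{j,v}}(\xi)=\int_{E^s_v}\int_0^\infty H_j(r\theta)r^{d-1}e^{-ir\langle\theta,\xi\rangle}\,dr\,d\sigma(\theta)$,
and integrate by parts $N$ times in $r$. This gives the bound $\mathscr M(2^j|\langle\theta,\xi\rangle|)^{-N}$, with each $\xi$-derivative costing $2^j$. On the support of $1-\psi$, for $\theta\in E^s_v$, one has $|\langle\theta,\xi\rangle|\ge|\langle e^s_v,\xi\rangle|-2^{-s}|\xi|\ge2^{-s(1-\kappa)}|\xi|$. So the angular error enters only through the phase, which is why $\psi\equiv1$ on $[-2,2]$.

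Next apply Cauchy--Schwarz with weight $(1+2^{-j}|x|)^{d}$ and Plancherel. For each $\theta$, this bounds the $L^1$ norm of the $l$-th piece by $\mathscr M2^{ld/2}2^{-N(l-s(1-\kappa))}$. Here the derivatives of the cutoff cost $2^{s(1-\kappa)}2^{j-l}\le2^j$, since $l>s(1-\kappa')\ge s(1-\kappa)$; it suffices to treat $\kappa'\le\kappa$. Summing over $l>s(1-\kappa')$, integrating over $\theta\in E^s_v$ and summing over $v$ gives the term $\mathscr M2^{s(d+(\kappa'-\kappa)N)}$.
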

	We combine the $L^1$ and $L^2$ estimates in \Cref{lemma:Seeger} into a single $L^2$ estimate in the following manner. 
	\begin{lemma}\label{lemma:Seegercombined}
		Let $\alpha>0$ and $\mathcal{Q}$ be a collection of cubes with disjoint interior such that for each $Q\in\mathcal{Q}$ there exists a function $b_Q\in L^1(Q)$ such that
		\[\int|b_Q(x)|dx\leq\alpha|Q|.\]
		Let $B_k=\sum_{Q\in\mathcal{Q}:|Q|=2^{dk}}b_Q$. Then, there exists $\delta>0$ such that for $s\geq5$, we have
		\[\left\|\sum_{j\in\Z}H_j*B^{j-s}\right\|_2^2\lesssim\mathscr{M}^22^{-s\delta}\alpha\sum_{Q\in\mathcal{Q}}\|b_Q\|_1.\]
	\end{lemma}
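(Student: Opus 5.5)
We will split $H_j=\Gamma_j^s+(H_j-\Gamma_j^s)$ as in \cite[(2.2)]{Seeger1996} and write
\[F:=\sum_{j}H_j*B^{j-s}=G+R,\qquad G=\sum_j\Gamma_j^s*B^{j-s},\quad R=\sum_j(H_j-\Gamma_j^s)*B^{j-s}.\]
Then $\|F\|_2^2\le 2\|G\|_2^2+2\|R\|_2^2$. The first part of \Cref{lemma:Seeger} gives $\|G\|_2^2\lesssim\mathscr{M}^2 2^{-s(1-\kappa)}\alpha\sum_Q\|b_Q\|_1$ directly. We assume throughout that $\int b_Q=0$, as in every application in \Cref{sec:proofmain}. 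All the work is therefore in converting the $L^1$ bound for $R$ from the second part of \Cref{lemma:Seeger} into an $L^2$ bound. To do this we will interpolate it against a crude, non-decaying bound, in the spirit of \cite{Honzik2020}:
\[\|R\|_2^2\le\|R\|_1\,\|R\|_\infty \quad\text{(or } \|R\|_2^2\le\|R\|_1^{\theta}\|R\|_p^{p(1-\theta)}\text{ for a suitable } p>2).\]

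\textbf{Step 1: the $L^1$ input.} Choose $\kappa'\in(0,\kappa)$ and $N\geq d+1$ so large that $d+(\kappa'-\kappa)N\le-\kappa'$. The second part of \Cref{lemma:Seeger} then gives
\[\|R\|_1\lesssim\mathscr{M}\,2^{-s\kappa'}\sum_Q\|b_Q\|_1 .\]

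\textbf{Step 2: a crude size bound for $R$ with at most polynomial loss $2^{Cs}$.} First we bound the kernels pointwise. Since $\psi(2^{s(1-\kappa)}\langle\xi,e_v^s\rangle/|\xi|)$ is a homogeneous multiplier of degree zero that is smooth away from the origin at angular scale $2^{-s(1-\kappa)}$, the kernel $\Gamma_j^s$, and hence $H_j-\Gamma_j^s$, should satisfy
\[|\Gamma_j^s(x)|+|H_j(x)|\lesssim \mathscr{M}\,2^{Cs}\,2^{-jd}(1+2^{-j}|x|)^{-d-1}.\]
Here the smoothness in the radial variable, with $N$ derivatives controlled by $\mathscr{M}$, is used to integrate by parts along $e_v^s$. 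Consequently $|R|\lesssim\mathscr{M}2^{Cs}\sum_j P_j|B^{j-s}|$, where $P_j$ is the Poisson-type average at scale $2^j$.

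Next we bound this sum. Since the $Q$ are disjoint and $\|b_Q\|_1\le\alpha|Q|$, each $P_j|B^{j-s}|$ is $\lesssim\alpha$. To sum in $j$ we will not aim at an $L^\infty$ bound but at an $L^2$ bound, by expanding the square:
\[\Big\|\sum_jP_j|B^{j-s}|\Big\|_2^2\le 2\sum_j\int P_j|B^{j-s}|\cdot\sum_{j'\ge j}P_{j'}|B^{j'-s}| .\]
For the tail sum $\sum_{j'\ge j}P_{j'}|B^{j'-s}|$ at points of $\supp{P_j|B^{j-s}|}$, we will use that all cubes of size $\le r$ meeting a ball of radius $r$ have total measure $\lesssim r^d$. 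We then hope to show that the tail is $\lesssim\alpha 2^{Cs}$ after discarding a set of measure $\lesssim 2^{-cs}\alpha^{-1}\sum\|b_Q\|_1$. Alternatively, we can bound it in the $L^2$ pairing by a Fourier-side almost-orthogonality argument for $\langle H_j*B^{j-s},H_{j'}*B^{j'-s}\rangle$ with $|j-j'|$ large. Either route would yield
\[\|R\|_2^2\lesssim\mathscr{M}^2 2^{Cs}\alpha\sum_Q\|b_Q\|_1 ,\]
and, more usefully, a bound $\|R\|_\infty\lesssim\mathscr{M}2^{Cs}\alpha$ off an exceptional set. On that exceptional set $R$ is controlled directly by the same $L^2$ bound.

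\textbf{Step 3: interpolation and choice of parameters.} Combining Steps 1 and 2 gives
\[\|R\|_2^2\lesssim\mathscr{M}^2\,2^{(C-\kappa')s}\alpha\sum_Q\|b_Q\|_1 .\]
We must choose the parameters so that this exponent is negative. To that end we track the loss $C$ from Step 2 explicitly as a multiple of $s(1-\kappa)$, coming from the angular width of the cones. We then take $\kappa$ close to $1$ so that $C$ is small, and take $\kappa'$ close to $\kappa$ with $N$ large, so that $C<\kappa'$. Finally we set $\delta=\min\{1-\kappa,\kappa'-C\}>0$.

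The main obstacle is Step 2. The $j$-sum defining $R$ has unboundedly many overlapping scales, so a naive pointwise bound loses a factor equal to the number of scales. The crude estimate must therefore exploit the disjointness of the Calder\'on--Zygmund cubes across scales, and its loss must be polynomial in $2^{s(1-\kappa)}$ so that it can be beaten by the $2^{-s\kappa'}$ gain in Step 1. If the pointwise route fails, we will fall back on a $TT^*$ computation for $\sum_j(H_j-\Gamma_j^s)*B^{j-s}$ along the lines of the proof of \cite[Lemma 2.1]{Seeger1996}, accepting the angular loss.
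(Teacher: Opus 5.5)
Your splitting $F=G+R$, with the $L^2$ bound for $G$ and the $L^1$ bound for $R$ from \Cref{lemma:Seeger}, is exactly the paper's starting point. You are also right that $\int b_Q=0$ is needed. But Step~2, which carries all the content, fails as proposed.

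The kernel bound for $\Gamma_j^s$ is not available. We have $\Gamma_j^s=\sum_v k_v*H^s_{j,v}$, where $k_v$ is the kernel of the degree-zero homogeneous multiplier $\psi(2^{s(1-\kappa)}\langle\xi,e_v^s\rangle/|\xi|)$. That is, $k_v$ is a Calder\'on--Zygmund distribution $c_v\delta+\mathrm{p.v.}\,\Omega_v(x/|x|)|x|^{-d}$.
\begin{itemize}
\item Since $\int H^s_{j,v}\neq0$ in general, as $|x|\to\infty$ one has $\Gamma_j^s(x)\approx|x|^{-d}\sum_v\big(\int H^s_{j,v}\big)\Omega_v(x/|x|)$. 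This is not even integrable, let alone $O(2^{-jd}(2^{-j}|x|)^{-d-1})$.
\item Locally, you are applying a Calder\'on--Zygmund convolution to a function that is merely bounded in the angular variable. That gives BMO control, not $L^\infty$ control.
\item Radial integration by parts is what produces Seeger's $L^1$ bound for $(H_j-\Gamma_j^s)*B^{j-s}$, using $\int b_Q=0$. It does not give pointwise bounds on the kernels.
\end{itemize}
So $R$ has no positive majorant of the kind you want, and no bound $|R|\lesssim 2^{Cs}\alpha$ off a small set is available.

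Independently, even a positive majorant such as $\sum_jP_j|B^{j-s}|$ is only in BMO. Your treatment of the exceptional set (``controlled directly by the same $L^2$ bound'') gives only $\mathscr{M}^22^{Cs}\alpha\sum_Q\|b_Q\|_1$ there, with no decay. More structurally, an $L^2$ bound with loss (including your $TT^*$ fallback) can never be combined with an $L^1$ bound to improve the $L^2$ bound. You need control at an exponent above $2$.

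The missing idea is to prove the higher-integrability estimate for the full sum $F$, not for $R$. The sum $F$ does have a positive majorant, $\sum_j\nu_j*|B^{j-s}|$ with $\nu_0$ a smooth bump dominating $|H_0|$. Using the smoothness of $\nu_0$ and the disjointness of the cubes across all scales, the paper shows this majorant has $L^1$ norm $\lesssim\mathscr{M}\sum_Q\|b_Q\|_1$ and BMO norm $\lesssim\mathscr{M}\alpha$, with no loss in $s$. Interpolation then gives $|\{|F|>\lambda\alpha\}|\lesssim\lambda^{-3}\alpha^{-1}\sum_Q\|b_Q\|_1$, up to powers of $\mathscr{M}$.

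The paper then splits $\int|F|^2$ over three sets:
\begin{itemize}
\item On $\{|R|\le2^{\rho s}\alpha\}$, one has $|R|^2\le2^{\rho s}\alpha|R|$, and the $2^{-s\kappa'}$ decay of $\|R\|_1$ wins.
\item On $\{|G|>2^{\rho s-1}\alpha\}$, Chebyshev together with $\|G\|_2^2$ suffices.
\item On $\{|F|>2^{\rho s-1}\alpha\}$, the $\lambda^{-3}$ tail gives $2^{-\rho s}$.
\end{itemize}
These sets cover $\R^d$ because $|R|\le|G|+|F|$. Since nothing is lost in $s$, there is no need to push $\kappa$ toward $1$. The paper takes $\kappa=\frac12$, $\kappa'=\frac14$, $N=4d+1$ and $\rho<\frac18$.
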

	\begin{proof}
		We write 
		\begin{align*}
			\sum_{j\in\Z}H_j*B^{j-s}=\sum_{j\in\Z}\Gamma_j^s*B^{j-s}+\sum_{j\in\Z}(H_j-\Gamma_j^s)*B^{j-s}=:A+B.
		\end{align*}
		We observe that
		\begin{align*}
			&\int|(A+B)(x)|^2dx\\
			=&\left(\int_{|B(\cdot)|\leq2^{\rho s}\alpha}+\int_{|A(\cdot)|>2^{\rho s-1}\alpha}+\int_{|(A+B)(\cdot)|>2^{\rho s-1}\alpha}\right)|(A+B)(x)|^2dx\\
			=:&I+II+III.
		\end{align*}
		\textit{Estimate for I:} By \Cref{lemma:Seeger} with $\kappa=\frac{1}{2},\;\kappa'=\frac{1}{4},\;N=4d+1$, we have
		\begin{align*}
			I&\lesssim \int|A(x)|^2dx+\int_{|B(\cdot)|\leq2^{\rho s}\alpha}|B(x)|^2dx\\
			&\lesssim\int|A(x)|^2dx+2^{\rho s}\alpha\int_{|B(\cdot)|\leq2^{\rho s}\alpha}|B(x)|dx\\
			&\lesssim\mathscr{M}^22^{-\frac{s}{8}}\alpha\sum\|b_Q\|_1,
		\end{align*}
		where we used $\rho$ such that $\rho<\frac{1}{8}$. To estimate $II$ and $III$, we will need the following claim:
		\begin{equation}\label{claim:sublevel}
			\left|\left\{x\in\R^d:|\sum_{j\in\Z}H_j*B^{j-s}|>\lambda\alpha\right\}\right|\lesssim \frac{\mathscr{M}}{\lambda^3\alpha}\sum\|b_Q\|_1,\quad\text{for}\;\lambda>0.
		\end{equation}
		Assuming the above estimate, we estimate $II$ and $III$.

		\textit{Estimate for $II$:} By Chebyshev's inequality, \Cref{lemma:Seeger} ($\kappa=\frac{1}{2}$) and \eqref{claim:sublevel}, we have
		\begin{align*}
			II&\lesssim\int_{0}^\infty\min\left\{\left|\left\{|A(x)|>2^{\rho s-1}\alpha\right\}\right|,\left|\left\{|(A+B)(x)|>\lambda\right\}\right|\right\}\lambda d\lambda\\
			&\lesssim\int_{0}^{2^{\rho s}\alpha}\left|\left\{|A(x)|>2^{\rho s-1}\alpha\right\}\right|\lambda d\lambda+\int_{2^{\rho s}\alpha}^{\infty}\left|\left\{|(A+B)(x)|>\lambda\right\}\right|\lambda d\lambda\\
			&\lesssim\|A\|_2^2+\mathscr{M}\alpha^2\sum\|b_Q\|_1\int_{2^{\rho s}\alpha}^{\infty}\frac{d\lambda}{\lambda^2}\\
			&\lesssim\mathscr{M}^2\alpha2^{-\frac{s}{8}}\sum\|b_Q\|_1.
		\end{align*}
		\textit{Estimate for $III$:} It is not difficult to see that an application of \eqref{claim:sublevel} implies that 
		\begin{align*}
			II&\lesssim\int_{0}^\infty\left|\left\{|(A+B)(x)|>\max\{\lambda,2^{\rho s-1}\alpha\}\right\}\right|\lambda d\lambda\lesssim\mathscr{M}^2\alpha2^{-\frac{s}{8}}\sum\|b_Q\|_1.
		\end{align*}
		We now prove \eqref{claim:sublevel}. We choose a smooth and positive function $\nu_0$ supported in $\{2^{-2}\leq|x|\leq2^2\}$ such that $\|\nu_0\|_\infty,\|\nu_0\|_1\leq10^d\mathscr{M}$ and $H_0(x)\leq\nu_0(x)$. We will prove \eqref{claim:sublevel} for $\nu_j(x)=2^{-jd}\nu(2^{-j}x)$ instead of $H_j$ and \eqref{claim:sublevel} will be a consequence. We note that the estimate \eqref{claim:sublevel} follows from the following two estimates
		\begin{align}
			&\left\|\sum_{j\in\Z}\nu_j*B^{j-s}\right\|_1\lesssim\mathscr{M}\sum\|b_Q\|_1,\label{est:claim1}\\
			&\left\|\sum_{j\in\Z}\nu_j*B^{j-s}\right\|_{BMO}\lesssim\mathscr{M}\alpha.\label{est:claim2}
		\end{align}
		Indeed, by Chebyshev's inequality and interpolation, we have that
		\begin{align*}
			\left|\left\{x\in\R^d:|\sum_{j\in\Z}H_j*B^{j-s}|>\lambda\alpha\right\}\right|&\lesssim\frac{1}{(\lambda\alpha)^3}\left\|\sum_{j\in\Z}\nu_j*B^{j-s}\right\|_3^3\\
			&\lesssim\frac{\mathscr{M}}{(\lambda\alpha)^3}\left[\left(\sum\|b_Q\|_1\right)^\frac{1}{3}\alpha^\frac{2}{3}\right]^3\\
			&\lesssim\frac{\mathscr{M}}{\lambda^3\alpha}\sum\|b_Q\|_1.
		\end{align*}
		The estimate \eqref{est:claim1} is easy to verify. 
		
		We will provide the proof for inequality \eqref{est:claim2}. First, we observe that by the disjointness of the collection $\mathcal{Q}$, for any $F\subset\R^d$, we have
		\begin{align}\label{est:suppcalculation}
			\sum_{j\in\Z}\|B^{j-s}\chi_F\|_1\sum_{Q\in\mathcal{Q}:Q\cap F\neq\emptyset}\alpha|Q|\leq\alpha|F|.
		\end{align}
		Now, we fix a cube $R\subset\R^d$, we write
		\begin{align*}
			&\frac{1}{|R|}\int_R\left|\sum_{j\in\Z}\nu_j*|B^{j-s}|(x)-\frac{1}{|R|}\int_R\sum_{2^j\geq\ell(R)}\nu_j*|B^{j-s}|(y)dy\right|dx\\
			\leq&\frac{1}{|R|}\int_R\left|\sum_{2^j\geq\ell(R)}\nu_j*|B^{j-s}|(x)-\frac{1}{|R|}\int_R\sum_{2^j\geq\ell(R)}\nu_j*|B^{j-s}|(y)dy\right|dx\\
			&+\frac{1}{|R|}\int_R\left|\sum_{2^j<\ell(R)}\nu_j*|B^{j-s}|(x)\right|dx\\
			=:&S_1+S_2.
		\end{align*}
		By mean value theorem and local support of the functions, we have
		\begin{align*}
			S_1&\lesssim\frac{1}{|R|}\int_R\sum_{2^j\geq\ell(R)}2^{-j}\ell(R)\|(\nabla\nu)_j\|_\infty\|B^{j-s}\chi_{B(c_R,2^{j+3})}|\|_1dx\\
			&\lesssim\sum_{2^j\geq\ell(R)}2^{-j}\ell(R)2^{-jd}\mathscr{M}\alpha|B(c_R,2^{j+3})|\\
			&\lesssim\mathscr{M}\alpha,
		\end{align*}
		where $C_R$ is the center of the cube $R$ and we used \eqref{est:suppcalculation} in the second step. 
		
		By the local support of the functions and \eqref{est:suppcalculation}, we have
		\begin{align*}
			S_2&\lesssim\frac{1}{|R|}\sum_{2^j<\ell(R)}\|\nu_j\|_1\|B^{j-s}\chi_{6R}\|_1\\
			&\lesssim\frac{\mathscr{M}}{|R|}\sum_{j\in\Z}\|B^{j-s}\chi_{6R}\|_1\\
			&\lesssim\frac{\mathscr{M}}{|R|}\alpha|R|=\mathscr{M}\alpha.
		\end{align*}
		Thus, \eqref{est:claim2} follows and this concludes the proof.
	\end{proof}
\section*{Acknowledgement}
Ankit Bhojak would like to thank Michael Lacey and Parasar Mohanty for various helpful conversations. Ankit Bhojak is supported by the Science and Engineering Research Board, Department of Science and Technology, Govt. of India, under the scheme National Post-Doctoral Fellowship, file no. PDF/2023/000708.
\bibliography{biblio}

\begin{bibdiv}
\begin{biblist}

\bib{BM2023}{article}{
      author={Bhojak, Ankit},
      author={Mohanty, Parasar},
       title={Weak type bounds for rough maximal singular integrals near {$L^1$}},
        date={2023},
        ISSN={0022-1236,1096-0783},
     journal={J. Funct. Anal.},
      volume={284},
      number={10},
       pages={Paper No. 109881, 27},
         url={https://doi.org/10.1016/j.jfa.2023.109881},
      review={\MR{4554742}},
}

\bib{Bourgain1989}{article}{
      author={Bourgain, Jean},
       title={Pointwise ergodic theorems for arithmetic sets},
        date={1989},
        ISSN={0073-8301,1618-1913},
     journal={Inst. Hautes \'Etudes Sci. Publ. Math.},
      number={69},
       pages={5\ndash 45},
         url={http://www.numdam.org/item?id=PMIHES_1989__69__5_0},
        note={With an appendix by the author, Harry Furstenberg, Yitzhak Katznelson and Donald S. Ornstein},
      review={\MR{1019960}},
}

\bib{CJRRW2000}{article}{
      author={Campbell, James~T.},
      author={Jones, Roger~L.},
      author={Reinhold, Karin},
      author={Wierdl, M\'at\'e},
       title={Oscillation and variation for the {H}ilbert transform},
        date={2000},
        ISSN={0012-7094,1547-7398},
     journal={Duke Math. J.},
      volume={105},
      number={1},
       pages={59\ndash 83},
         url={https://doi.org/10.1215/S0012-7094-00-10513-3},
      review={\MR{1788042}},
}

\bib{CJRW2003}{article}{
      author={Campbell, James~T.},
      author={Jones, Roger~L.},
      author={Reinhold, Karin},
      author={Wierdl, M\'at\'e},
       title={Oscillation and variation for singular integrals in higher dimensions},
        date={2003},
        ISSN={0002-9947,1088-6850},
     journal={Trans. Amer. Math. Soc.},
      volume={355},
      number={5},
       pages={2115\ndash 2137},
         url={https://doi.org/10.1090/S0002-9947-02-03189-6},
      review={\MR{1953540}},
}

\bib{CR1988}{article}{
      author={Christ, Michael},
      author={Rubio~de Francia, Jos\'{e}~Luis},
       title={Weak type {$(1,1)$} bounds for rough operators. {II}},
        date={1988},
        ISSN={0020-9910,1432-1297},
     journal={Invent. Math.},
      volume={93},
      number={1},
       pages={225\ndash 237},
         url={https://doi.org/10.1007/BF01393693},
      review={\MR{943929}},
}

\bib{CZ1956}{article}{
      author={Calder\'on, A.~P.},
      author={Zygmund, A.},
       title={On singular integrals},
        date={1956},
        ISSN={0002-9327,1080-6377},
     journal={Amer. J. Math.},
      volume={78},
       pages={289\ndash 309},
         url={https://doi.org/10.2307/2372517},
      review={\MR{84633}},
}

\bib{DHL2017}{article}{
      author={Ding, Yong},
      author={Hong, Guixiang},
      author={Liu, Honghai},
       title={Jump and variational inequalities for rough operators},
        date={2017},
        ISSN={1069-5869,1531-5851},
     journal={J. Fourier Anal. Appl.},
      volume={23},
      number={3},
       pages={679\ndash 711},
         url={https://doi.org/10.1007/s00041-016-9484-8},
      review={\MR{3649476}},
}

\bib{DOP2017}{article}{
      author={Do, Yen},
      author={Oberlin, Richard},
      author={Palsson, Eyvindur~A.},
       title={Variation-norm and fluctuation estimates for ergodic bilinear averages},
        date={2017},
        ISSN={0022-2518,1943-5258},
     journal={Indiana Univ. Math. J.},
      volume={66},
      number={1},
       pages={55\ndash 99},
         url={https://doi.org/10.1512/iumj.2017.66.5983},
      review={\MR{3623404}},
}

\bib{DR1986}{article}{
      author={Duoandikoetxea, Javier},
      author={Rubio~de Francia, Jos{\'e}~L.},
       title={Maximal and singular integral operators via {F}ourier transform estimates},
        date={1986},
        ISSN={0020-9910,1432-1297},
     journal={Invent. Math.},
      volume={84},
      number={3},
       pages={541\ndash 561},
         url={https://doi.org/10.1007/BF01388746},
      review={\MR{837527}},
}

\bib{HLP2013}{article}{
      author={Hyt\"onen, Tuomas~P.},
      author={Lacey, Michael~T.},
      author={P\'erez, Carlos},
       title={Sharp weighted bounds for the {$q$}-variation of singular integrals},
        date={2013},
        ISSN={0024-6093,1469-2120},
     journal={Bull. Lond. Math. Soc.},
      volume={45},
      number={3},
       pages={529\ndash 540},
         url={https://doi.org/10.1112/blms/bds114},
      review={\MR{3065022}},
}

\bib{Hofmann1988}{article}{
      author={Hofmann, Steve},
       title={Weak {$(1,1)$} boundedness of singular integrals with nonsmooth kernel},
        date={1988},
        ISSN={0002-9939,1088-6826},
     journal={Proc. Amer. Math. Soc.},
      volume={103},
      number={1},
       pages={260\ndash 264},
         url={https://doi.org/10.2307/2047563},
      review={\MR{938680}},
}

\bib{Honzik2020}{article}{
      author={Honz\'ik, Petr},
       title={An endpoint estimate for rough maximal singular integrals},
        date={2020},
        ISSN={1073-7928,1687-0247},
     journal={Int. Math. Res. Not. IMRN},
      number={19},
       pages={6120\ndash 6134},
         url={https://doi.org/10.1093/imrn/rny189},
      review={\MR{4165473}},
}

\bib{IKM2025}{article}{
      author={Iosevich, Alex},
      author={Krause, Ben},
      author={Mousavi, Hamed},
       title={An approach to endpoint problems in oscillatory singular integrals},
        date={2025},
        ISSN={0025-5874,1432-1823},
     journal={Math. Z.},
      volume={311},
      number={4},
       pages={Paper No. 66, 7},
         url={https://doi.org/10.1007/s00209-025-03859-8},
      review={\MR{4962612}},
}

\bib{JKRW1998}{article}{
      author={Jones, Roger~L.},
      author={Kaufman, Robert},
      author={Rosenblatt, Joseph~M.},
      author={Wierdl, M\'at\'e},
       title={Oscillation in ergodic theory},
        date={1998},
        ISSN={0143-3857,1469-4417},
     journal={Ergodic Theory Dynam. Systems},
      volume={18},
      number={4},
       pages={889\ndash 935},
         url={https://doi.org/10.1017/S0143385798108349},
      review={\MR{1645330}},
}

\bib{JRW2001}{article}{
      author={Jones, Roger~L.},
      author={Rosenblatt, Joseph~M.},
      author={Wierdl, M\'at\'e},
       title={Oscillation inequalities for rectangles},
        date={2001},
        ISSN={0002-9939,1088-6826},
     journal={Proc. Amer. Math. Soc.},
      volume={129},
      number={5},
       pages={1349\ndash 1358},
         url={https://doi.org/10.1090/S0002-9939-00-06032-9},
      review={\MR{1814160}},
}

\bib{JRW2003}{article}{
      author={Jones, Roger~L.},
      author={Rosenblatt, Joseph~M.},
      author={Wierdl, M\'at\'e},
       title={Oscillation in ergodic theory: higher dimensional results},
        date={2003},
        ISSN={0021-2172,1565-8511},
     journal={Israel J. Math.},
      volume={135},
       pages={1\ndash 27},
         url={https://doi.org/10.1007/BF02776048},
      review={\MR{1996394}},
}

\bib{JSW2008}{article}{
      author={Jones, Roger~L.},
      author={Seeger, Andreas},
      author={Wright, James},
       title={Strong variational and jump inequalities in harmonic analysis},
        date={2008},
        ISSN={0002-9947,1088-6850},
     journal={Trans. Amer. Math. Soc.},
      volume={360},
      number={12},
       pages={6711\ndash 6742},
         url={https://doi.org/10.1090/S0002-9947-08-04538-8},
      review={\MR{2434308}},
}

\bib{KL2018}{article}{
      author={Krause, Ben},
      author={Lacey, Michael~T.},
       title={Sparse bounds for maximal monomial oscillatory {H}ilbert transforms},
        date={2018},
        ISSN={0039-3223,1730-6337},
     journal={Studia Math.},
      volume={242},
      number={3},
       pages={217\ndash 229},
         url={https://doi.org/10.4064/sm8699-7-2017},
      review={\MR{3794333}},
}

\bib{KL2020}{article}{
      author={Krause, Ben},
      author={Lacey, Michael~T.},
       title={Sparse bounds for maximally truncated oscillatory singular integrals},
        date={2020},
        ISSN={0391-173X,2036-2145},
     journal={Ann. Sc. Norm. Super. Pisa Cl. Sci. (5)},
      volume={20},
      number={2},
       pages={415\ndash 435},
      review={\MR{4105908}},
}

\bib{Lep1976}{article}{
      author={L\'epingle, D.},
       title={La variation d'ordre {$p$} des semi-martingales},
        date={1976},
     journal={Z. Wahrscheinlichkeitstheorie und Verw. Gebiete},
      volume={36},
      number={4},
       pages={295\ndash 316},
         url={https://doi.org/10.1007/BF00532696},
      review={\MR{420837}},
}

\bib{Lai2025}{article}{
      author={Lai, Xudong},
       title={Weak $(1,1)$ estimate for maximal truncated rough singular integral operator},
        date={2025},
      eprint={2508.17737},
         url={https://arxiv.org/abs/2508.17737},
}

\bib{LL2012}{article}{
      author={Lewko, Allison},
      author={Lewko, Mark},
       title={Estimates for the square variation of partial sums of {F}ourier series and their rearrangements},
        date={2012},
        ISSN={0022-1236,1096-0783},
     journal={J. Funct. Anal.},
      volume={262},
      number={6},
       pages={2561\ndash 2607},
         url={https://doi.org/10.1016/j.jfa.2011.12.007},
      review={\MR{2885959}},
}

\bib{MSZ2020}{article}{
      author={Mirek, Mariusz},
      author={Stein, Elias~M.},
      author={Zorin-Kranich, Pavel},
       title={Jump inequalities via real interpolation},
        date={2020},
        ISSN={0025-5831,1432-1807},
     journal={Math. Ann.},
      volume={376},
      number={1-2},
       pages={797\ndash 819},
         url={https://doi.org/10.1007/s00208-019-01889-2},
      review={\MR{4055178}},
}

\bib{Seeger1996}{article}{
      author={Seeger, Andreas},
       title={Singular integral operators with rough convolution kernels},
        date={1996},
        ISSN={0894-0347,1088-6834},
     journal={J. Amer. Math. Soc.},
      volume={9},
      number={1},
       pages={95\ndash 105},
         url={https://doi.org/10.1090/S0894-0347-96-00185-3},
      review={\MR{1317232}},
}

\end{biblist}
\end{bibdiv}
\end{document}